\theoremstyle{plain} 
\newtheorem{thm}{Theorem}[section]
\newtheorem{lem}[thm]{Lemma}
\newtheorem{prop}[thm]{Proposition}
\newtheorem{corollary}[thm]{Corollary}
\theoremstyle{definition}
\newtheorem{definition}[thm]{Definition}
\newtheorem{example}[thm]{Example}
\newtheorem{remark}[thm]{Remark}
\newtheorem{problem}[thm]{Problem}
\newcounter{tmp}
\begin{document}

\title{Nef cones of nested Hilbert schemes of points on surfaces}

\author{Tim Ryan}
\thanks{During the preparation of this article, the first author was partially supported by the NSF grant DMS-1547145.}
\address{Simons Center for Geometry and Physics/Stony Brook University, Stony Brook, NY 11794}
\email{timothy.ryan@stonybrook.edu}

\author{Ruijie Yang}
\thanks{Submitted August 2, 2017.}
\address{Stony Brook University, Stony Brook, NY 11794}
\email{ruijie.yang@stonybrook.edu}

\begin{abstract}
    Let $X$ be the projective plane, a Hirzebruch surface, or a general $K3$ surface. In this paper, we study the birational geometry of various nested Hilbert schemes of points parameterizing pairs of zero-dimensional subschemes on $X$. We calculate the nef cone for two types of nested Hilbert schemes. As an application, we recover a theorem of Butler about syzygies on Hirzebruch surfaces. 

\end{abstract}

\subjclass[2010]{14E30, 14C05, 14C22, 13D02, 14J26, 14J28}

\maketitle

\section{Introduction}
In this paper, we will show that many of the methods developed to study the birational geometry of Hilbert schemes of points on surfaces apply to the study of nested Hilbert schemes of points as well. 
In particular, we will compute the nef cones of two types of nested Hilbert schemes for rational surfaces and general $K3$ surfaces.
These will provide examples of nef cones for varieties with high dimension and high Picard rank including some that are not $\mathbf{Q}$-factorial.

Let $X$ be a smooth projective surface over $\mathbf{C}$ and let $X^{[n]}$ denote the Hilbert scheme parametrizing zero dimensional subschemes of length $n$ of $X$. 
The Hilbert scheme $X^{[n]}$ is a smooth projective variety of dimension $2n$ by \cite{Fogarty2}. 
In recent years, the birational geometry of $X^{[n]}$ has been extensively studied.
In particular, the minimal model program (MMP) has been run on it for many specific types of surface $X$ (e.g. for $\mathbf{P}^2$ in \cite{ABCH} and \cite{LZ}).

On the other hand, there are various natural generalizations of $X^{[n]}$ about which far less is known, in particular nested Hilbert schemes of points. 
They appear naturally in several contexts. 
First, they appear in the study of topology of Hilbert schemes of points. For example, they were used to define the representation of the Heisenberg algebra on $\oplus_n H^{\ast}(X^{[n]},\mathbf{Q})$ in \cite{Nakajima2}.
Secondly, they appear in the study of knot invariants (e.g. \cite{GORS, OS}). 
Finally, they appear in the study of commuting varieties of parabolic subalgebras of $\mathrm{GL}(n)$ (c.f \cite{GG, BB}).

Here we mainly consider
\[ X^{[n+1,n]} \subset X^{[n+1]} \times X^{[n]} \] 
which parametrizes pairs of subschemes $(z,z')$ of length $n+1$ and $n$, respectively, on $X$ such that $z'$ is a subscheme of $z$. 
It's known that $X^{[n+1,n]}$ is a smooth variety of dimension $2n+2$ by \cite[Chapter 1]{Cheah}. 
It is equipped with two projection maps, 
\[ \mathrm{pr}_a:X^{[n+1,n]}\to X^{[n+1]} \text{ and } \mathrm{pr}_b:X^{[n+1,n]}\to X^{[n]}. \]
Near the end of the paper, we will also study the universal family $Z^{[n]} \subset X^{[n]} \times X$ which is the nested Hilbert scheme $X^{[n,1]}$.

The eventual goal would be to run the MMP on these spaces. Often the first step of running the MMP is computing the nef cone.
Recall that a Cartier divisor $D$ on an irreducible projective variety $Y$ is \textit{nef} if $c_1(\mathcal{O}(D))\cdot C\geq 0$ for every irreducible curve $C\subset Y$, where $\mathcal{O}(D)$ is the associated line bundle of $D$ on $Y$. 
This notion naturally extends to $\mathbf{R}$-Cartier divisors (see \cite[Section 1.4]{Lazarsfeld1}). The \textit{nef cone} of $Y$ is the convex cone of the numerical classes of all nef $\mathbf{R}$-Cartier divisors on $Y$. 

Using classical methods (i.e. $k$-very ample line bundles \cite{CG}), the nef cone was computed for Hilbert schemes of points on the projective plane \cite{LQZ}, on the product of projective lines, on Hirzebruch surfaces, and on del Pezzo surfaces of degree at least two \cite{BC}. 
Recently, there has been tremendous progress using Bridgeland stability to compute the nef cones of Hilbert schemes of points on K3 surfaces in 
\cite{BM2}, on Abelian surfaces (c.f. \cite{MM,YY}), on Enriques surfaces (c.f. \cite{Nue}), and on all surfaces with Picard number one and irregularity zero 
\cite{BHLRSWZ}.

To calculate the nef cone for nested Hilbert schemes, we must first understand the Picard group and the N\'eron-Severi group. 
\begingroup
\setcounter{tmp}{\value{thm}}
\setcounter{thm}{0} 
\renewcommand\thethm{\Alph{thm}}
\begin{prop}
Let $X$ be a irreducible smooth projective surface of irregularity zero and fix $n\geq 2$. Then
\[\mathrm{Pic}\left(X^{[n+1,n]} \right) \cong \mathrm{Pic}(X)^{\oplus 2} \oplus \mathbf{Z}^2.\]
In particular, the N\'eron-Severi group $N^1(X^{[n+1,n]})$ has rank $2(\rho(X)+1)$, where $\rho(X)$ is the Picard number of $X$.
\end{prop}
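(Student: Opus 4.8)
The plan is to realise $X^{[n+1,n]}$ as a birational modification of $X^{[n]}\times X$ that contracts a single divisor onto the universal subscheme $Z^{[n]}$, and then to combine a K\"unneth decomposition with Fogarty's computation of $\mathrm{Pic}(X^{[n]})$.

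\emph{Step 1: the residual morphism.} For $(z,z')\in X^{[n+1,n]}$ the sheaf $\mathcal I_{z'}/\mathcal I_z$ has length one, hence is the skyscraper $k(p)$ at a single reduced point $p=p(z,z')\in X$; this defines a morphism $X^{[n+1,n]}\to X$, and together with $\mathrm{pr}_b$ it gives
\[ \sigma=(\mathrm{pr}_b,\,p)\colon X^{[n+1,n]}\longrightarrow X^{[n]}\times X . \]
Over $W_0:=(X^{[n]}\times X)\setminus Z^{[n]}$ a pair $(z',p)$ with $p\notin\mathrm{supp}(z')$ has the unique preimage $z=z'\sqcup\{p\}$, so $\sigma$ restricts to an isomorphism over $W_0$; since $X^{[n+1,n]}$ is irreducible of dimension $2n+2=\dim(X^{[n]}\times X)$, $\sigma$ is birational. (The other projection $\mathrm{pr}_a$ is generically finite of degree $n+1$ and so is not convenient here.) More generally I would check that the fibre of $\sigma$ over an arbitrary $(z',p)$ is $\mathbb P\!\left(\mathrm{Hom}_{\mathcal O_X}(\mathcal I_{z'},k(p))\right)$, of dimension $\mu_p(\mathcal I_{z'})-1$ with $\mu_p$ the minimal number of local generators at $p$ --- indeed a colength-one subideal $\mathcal I_z\subset\mathcal I_{z'}$ supported at $p$ is the same datum as a surjection $\mathcal I_{z'}\twoheadrightarrow k(p)$ up to scaling. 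This local study, and the control of $\sigma^{-1}(Z^{[n]})$ it feeds into, is the step I expect to be the main obstacle.

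\emph{Step 2: the exceptional divisor.} By Step 1 the exceptional locus of $\sigma$ equals $\sigma^{-1}(Z^{[n]})$. I would first recall that $Z^{[n]}$ is irreducible --- its projection to $X^{[n]}$ is finite of degree $n$ with full symmetric monodromy over the locus of reduced subschemes --- and of dimension $2n$, hence of codimension $2$ in $X^{[n]}\times X$. Over the dense locus of $Z^{[n]}$ where $z'$ is curvilinear at $p$ one has $\mu_p(\mathcal I_{z'})=2$, so the fibre of $\sigma$ there is $\mathbb P^1$; thus $\sigma^{-1}(Z^{[n]})$ has a unique component $E$ dominating $Z^{[n]}$, with $\dim E=2n+1$. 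A dimension count then shows that every other irreducible component of $\sigma^{-1}(Z^{[n]})$ has dimension $\le 2n$: away from the generic point of $Z^{[n]}$ the fibre of $\sigma$ exceeds $\mathbb P^1$ only over the non-curvilinear locus of $X^{[n]}$, which has codimension $\ge 2$, and the resulting loss of base dimension outweighs the gain in fibre dimension (using that a colength-$\ell$ ideal in a two-dimensional regular local ring has at most $\ell+1$ generators). Hence $E$ is the only codimension-one component of $\sigma^{-1}(Z^{[n]})$, and $\sigma^{-1}(Z^{[n]})\setminus E$ has codimension $\ge 2$ in $X^{[n+1,n]}$.

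\emph{Step 3: assembling the answer.} The rest is formal. Put $Y_0:=\sigma^{-1}(W_0)\cong W_0$. The open set $X^{[n+1,n]}\setminus E$ differs from $Y_0$ only in the codimension-$\ge 2$ locus of Step 2, and $W_0$ differs from $X^{[n]}\times X$ only in the codimension-$2$ locus $Z^{[n]}$; since all the varieties involved are smooth, removing these loci does not change divisor class groups, and the localisation sequence yields an exact sequence
\[ \mathbf Z\cdot[E]\longrightarrow \mathrm{Pic}\big(X^{[n+1,n]}\big)\longrightarrow \mathrm{Pic}\big(X^{[n]}\times X\big)\longrightarrow 0 \]
split by $\sigma^*$. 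As $E$ is an irreducible effective divisor contracted by $\sigma$, its class is non-torsion and lies outside $\sigma^*\mathrm{Pic}(X^{[n]}\times X)$, so the sequence splits as a direct sum and $\mathrm{Pic}(X^{[n+1,n]})\cong\mathrm{Pic}(X^{[n]}\times X)\oplus\mathbf Z$. Since $X$, and hence $X^{[n]}$, has irregularity zero, $\mathrm{Pic}(X^{[n]}\times X)\cong\mathrm{Pic}(X^{[n]})\oplus\mathrm{Pic}(X)$; and since $n\ge 2$, Fogarty's theorem gives $\mathrm{Pic}(X^{[n]})\cong\mathrm{Pic}(X)\oplus\mathbf Z$, the extra factor generated by half the exceptional divisor of the Hilbert--Chow morphism. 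Combining the three isomorphisms gives $\mathrm{Pic}(X^{[n+1,n]})\cong\mathrm{Pic}(X)^{\oplus 2}\oplus\mathbf Z^2$; taking ranks, and using $\rho(X)=\mathrm{rank}\,\mathrm{Pic}(X)$ (again because $q(X)=0$), we get $\mathrm{rank}\,N^1(X^{[n+1,n]})=2\rho(X)+2=2(\rho(X)+1)$.
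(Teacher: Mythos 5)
Your proposal is correct and follows essentially the same route as the paper: both realise the map to $X^{[n]}\times X$ as a birational contraction of a single divisor onto $Z^{[n]}$, excise the loci where the fibres jump, and finish with the K\"unneth decomposition of $\mathrm{Pic}(X^{[n]}\times X)$ (valid since $q(X)=0$) plus Fogarty's theorem. The only technical divergence is in how the extra $\mathbf{Z}$ is extracted: the paper restricts to a large open set on which the map is literally the blow-up of the smooth variety $U$ along the smooth locus $Z^{[n]}_{\mathrm{sm}}$ (citing Ellingsrud--Str{\o}mme for the global blow-up structure) and invokes the Picard formula for blow-ups along smooth centers, whereas you run the excision sequence directly and only need that $E$ is the unique divisorial component of $\sigma^{-1}(Z^{[n]})$ and that its class is non-torsion --- a marginally more economical argument that avoids needing the full blow-up identification. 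The step you rightly flag as the main obstacle, controlling $\sigma^{-1}(Z^{[n]})$ away from $E$, is exactly where the paper imports the Ellingsrud--Str{\o}mme stratification $W_{i,n}$ by the number $i$ of local generators of $I_{z'}$ at $p$, with $\mathrm{codim}(W_{i,n},X^{[n]}\times X)\geq 2i-2$ and fibre dimension $i-1$, so that the preimages with $i\geq 3$ have codimension $\geq i-1\geq 2$; your dimension count would need to reproduce or cite this stratified bound, since the cruder inputs you name (the non-curvilinear locus having codimension $\geq 2$, and $\mu\leq \ell+1$ for a colength-$\ell$ ideal) are not by themselves quantitative enough to handle every stratum $i\geq 3$.
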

\endgroup

Knowing the Picard groups, we can describe the nef cones. 
To easily state our theorem, let us first recall the nef cone of the Hilbert schemes of points on $\mathbf{P}^2$.
The nef cone of $\left(\mathbf{P}^{2}\right)^{[n]}$ is spanned by the two divisors 
\[ H[n] \text{ and } D_{n-1}[n] = (n-1)H[n] -\frac{1}{2}B[n], \]
where $H[n]$ is the class of the pull-back of the ample generator via the Hilbert-Chow morphism and $B[n]$ is the exceptional locus.

\begingroup
\setcounter{tmp}{\value{thm}}
\setcounter{thm}{1}
\renewcommand\thethm{\Alph{thm}}
\begin{thm}
The nef cone of $\left(\mathbf{P}^{2}\right)^{[n+1,n]}$, $n>1$, is spanned by the four divisors \[ \mathrm{pr}_b^\ast(H[n]), \mathrm{pr}_b^\ast(D_{n-1}[n]), \mathrm{pr}_a^\ast(H[n+1])-\mathrm{pr}_b^\ast(H[n]), \text{ and }\mathrm{pr}_a^\ast(D_{n}[n+1]). \]
Similar results hold for the Hirzebruch surfaces $\mathbf{F}_i$, $i\geq 0$, and general $K3$ surfaces $S$ as well as for the universal families on all of these surfaces.
\end{thm}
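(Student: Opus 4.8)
\emph{Proof sketch.}
Write $\rho\colon X^{[n+1,n]}\to X$ for the \emph{residual-point morphism}, which sends a pair $(z,z')$ to the point of $X$ supporting the length-one sheaf $\mathcal{I}_{z'}/\mathcal{I}_z$; together with $\mathrm{pr}_b$ it realizes $X^{[n+1,n]}$ as the blow-up of $X^{[n]}\times X$ along the universal family $Z^{[n]}$, and together with $\mathrm{pr}_a$ it is a birational morphism onto $Z^{[n+1]}\subset X^{[n+1]}\times X$. We first treat $X=\mathbf{P}^2$. By Proposition~A the group $N^1(\mathbf{P}^{2[n+1,n]})$ has rank $4$, and a short determinant computation shows that the four listed classes form a basis; hence they span a full-dimensional simplicial cone $\mathcal{N}$, and the theorem amounts to the two inclusions $\mathcal{N}\subseteq\mathrm{Nef}(\mathbf{P}^{2[n+1,n]})\subseteq\mathcal{N}$.

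\emph{Nefness of the four classes.} Three of them are pullbacks along a morphism of divisors already known to be nef: $\mathrm{pr}_b^{\ast}H[n]$ and $\mathrm{pr}_b^{\ast}D_{n-1}[n]$ from the nef cone $\langle H[n],D_{n-1}[n]\rangle$ of $\mathbf{P}^{2[n]}$ (see \cite{LQZ}), and $\mathrm{pr}_a^{\ast}D_n[n+1]$ from that of $\mathbf{P}^{2[n+1]}$. For the remaining class I would prove
\[
\mathrm{pr}_a^{\ast}H[n+1]-\mathrm{pr}_b^{\ast}H[n]=\rho^{\ast}H,
\]
which is nef since $\rho$ is a morphism and $H$ is ample: all three classes descend along the natural map $X^{[n+1,n]}\to X^{(n+1)}\times X^{(n)}\times X$ to the symmetric products, whose image lies in the addition locus $\{(\xi,\xi',x):\xi=\xi'+x\}$, and there the incidence divisor ``$\xi$ meets a fixed line $\ell$'' is literally the sum of ``$\xi'$ meets $\ell$'' and ``$x$ lies on $\ell$''.

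\emph{The dual curves.} It is enough to produce effective irreducible curves $C_1,C_2,C_3,C_4$ with $D_i\cdot C_j=0$ for $i\neq j$ and $D_i\cdot C_i>0$, where $D_1,\dots,D_4$ are the four divisors in the order listed: then any nef class, being nonnegative on each $C_j$, lies in $\mathcal{N}$. Fix a line $\ell\subset\mathbf{P}^2$ and distinct points $s_1,\dots,s_n\in\ell$. Let $C_1=\{(z_t,z_t')\}_{t\in\ell}$ with $z_t=\{s_1,\dots,s_n\}\cup\{t\}$ and $z_t'=\{s_2,\dots,s_n\}\cup\{t\}$, so that $\rho$ contracts $C_1$ to $s_1$ while $\mathrm{pr}_b$ and $\mathrm{pr}_a$ send it to a line in the locus of schemes supported on $\ell$ inside $\mathbf{P}^{2[n]}$, resp. $\mathbf{P}^{2[n+1]}$. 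Let $C_3=\{(z',\,z'\cup\{t\})\}_{t\in\ell}$ with $z'$ a fixed reduced length-$n$ subscheme of $\ell$, so that $\mathrm{pr}_b$ contracts $C_3$ and $\rho$ maps it isomorphically to $\ell$. Let $C_2$ be a pencil of length-$n$ subschemes inside a fibre of $\mathrm{pr}_a$ over a non-curvilinear length-$(n+1)$ scheme (contracted by $\mathrm{pr}_a$ and by $\rho$, with $\mathrm{pr}_b$-image a curve contracted by the Hilbert--Chow morphism of $\mathbf{P}^{2[n]}$), and let $C_4$ be the analogous pencil of length-$(n+1)$ subschemes (a point acquiring a varying tangent direction) inside a fibre of $\mathrm{pr}_b$ over a fixed reduced $z'$. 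The required vanishings and positivities reduce, via these images, to the degree $2(m-1)$ of the discriminant hypersurface in $(\mathbf{P}^1)^{(m)}$ and to the intersections of the extremal curves of $\overline{NE}(\mathbf{P}^{2[n]})$ and $\overline{NE}(\mathbf{P}^{2[n+1]})$ against the boundary classes, and one checks that the resulting $4\times 4$ intersection matrix is diagonal with positive entries. The hypothesis $n>1$ is exactly what keeps those nef cones two-dimensional, which is what makes $C_1$ and $C_2$ available; I expect this bookkeeping --- identifying the curves the three projections contract and computing on $X^{[n+1,n]}$ itself --- to be the main technical point over $\mathbf{P}^2$.

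\emph{Other surfaces and the universal families.} For a Hirzebruch surface $\mathbf{F}_i$ the rank of $N^1(X^{[n+1,n]})$ is $6$; one pulls the nef cone of $\mathbf{F}_i^{[m]}$ (computed in \cite{BC} by the $k$-very ampleness method of \cite{BS}) back along $\mathrm{pr}_a$ and $\mathrm{pr}_b$, adjoins the classes $\rho^{\ast}(\text{fibre})$ and $\rho^{\ast}(\text{section})$ --- nef by the same addition-of-incidence-divisors identity --- and uses the evident analogues of the curves above. For a general $K3$ surface $S$ the Bridgeland-stability description of $\mathrm{Nef}(S^{[m]})$ from \cite{BM2} takes the place of the symmetric-product input: the nontrivial extremal divisor of $S^{[m]}$ is not pulled back from $S^{(m)}$, so one instead uses that it is a positive multiple of a Bridgeland wall class and invokes the Bayer--Macr\`i positivity lemma to conclude that its $\mathrm{pr}$-pullbacks and $\rho^{\ast}H$ remain nef, producing the dual curves from the relevant moduli of Bridgeland-stable objects; this adaptation is the main obstacle in the $K3$ case. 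Finally the universal family $Z^{[n]}=X^{[n,1]}$ is handled by the identical recipe, using its two projections to $X^{[n]}$ and $X$ --- and passing, if convenient, to the resolution $X^{[n,n-1]}\to X^{[n,1]}$, since $X^{[n,1]}$ need not be smooth --- together with the known nef cone of $X^{[n]}$.
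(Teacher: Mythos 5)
Your proposal follows essentially the same route as the paper: nefness of three of the classes as pullbacks of nef divisors along $\mathrm{pr}_a,\mathrm{pr}_b$ and of the fourth as $\mathrm{res}^{\ast}H$, followed by a diagonal $4\times 4$ intersection table against four curves that are exactly the paper's $C^b_{l,n}$, $A^b$, $C^a_{l,n+1}$, and $A^a$. The only substantive divergence is in the $K3$ case, where the paper constructs the two non-trivial dual curves concretely from $g$-nodal rational curves in $|H|$ (via Chen's existence theorem) rather than from moduli of Bridgeland-stable objects.
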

\endgroup

Knowing the nef cone allows us to recover a special case of \cite[Theorem 2B]{Butler} about the projective normality of line bundles on Hirzebruch surfaces.
\begingroup
\setcounter{tmp}{\value{thm}}
\setcounter{thm}{2}
\renewcommand\thethm{\Alph{thm}}
\begin{prop}
Let $X$ be a Hirzebruch surface, and $A$ be an ample line bundles on $X$, then $L=K_X+nA$ is projectively normal for $n\geq 4$.
\end{prop}
\endgroup

Finally, the computation of the nef cone is only one step in understanding the geometry of these spaces so we conclude with a list of open and related problems.

\subsection{Organization}
In Section \ref{sec: prelim}, we review basic facts about (nested) Hilbert schemes.
In Section \ref{sec: pic}, we show that for surfaces of irregularity zero, the Picard group of $X^{[n+1,n]}$ is generated by $\mathrm{Pic}(X)^{\oplus 2}$ and two additional classes corresponding to divisors of nonreduced schemes.
In Section \ref{sec: div and curves}, we give a basis for the divisors and curves on $X^{[n+1,n]}$, and we define additional divisors and curves that will be used.
In Section \ref{sec: nef}, we compute the nef cones of $X^{[n+1,n]}$ where $X$ is the projective plane, a Hirzebruch surface, or a general K3 surface. 
In Section \ref{sec: univ fam}, we briefly work out the universal family case.
In Section \ref{sec: application}, we apply the nef cone computation to prove Proposition C.
Finally, in Section \ref{sec: problems}, we pose some open questions.

\section{Preliminaries}
\label{sec: prelim}
In this section, we will recall the basic definitions and properties of (nested) Hilbert schemes of points on surfaces.
Some standard references for Hilbert schemes of points are \cite{Fogarty1}, \cite{Nakajima} and \cite{Gottsche1}.
References for the more general setting of nested Hilbert schemes are \cite{Kleppe} and \cite{Cheah}.

Throughout the paper, let $X$ be a smooth irreducible projective surface over $\mathbf{C}$ of irregularity $q:=H^1(\mathcal{O}_X)=0$.


\subsection{Hilbert schemes of points}
Given a surface $X$, the symmetric group acts on its product $X^n$ by permuting the factors.
Taking the quotient by this action gives the \textit{symmetric product} of $X$, denoted $X^{(n)}$.
This space parametrizes unordered collections of $n$ points of $X$.

A natural desingularization of the symmetric product is the Hilbert scheme of $n$ points on $X$, denoted by $X^{[n]}$, which parametrizes subschemes of $X$ with Hilbert polynomial $n$.
It resolves $X^{(n)}$ via the \textit{Hilbert-Chow morphism} 
\[\mathrm{hc}: X^{[n]} \to X^{(n)},\]
which maps a subscheme to its support with multiplicity.
It is well known that $X^{[n]}$ is an irreducible smooth projective variety of dimension $2n$ by \cite[Lemma 4.4]{Fogarty2}.

Using the Hilbert-Chow morphism, we want to associate each line bundle on $X$ to a line bundle on $X^{[n]}$.
Informally, we pull the line bundle back along each projection of $X^n$ onto its factors, tensor all of them so that it descends to the symmetric product, and then pull it back along the Hilbert-Chow morphism.

\begin{definition}
Let $L[n]$ be the class of the pull-back of $L^{(n)}$ by the Hilbert-Chow morphism $X^{[n]} \to X^{(n)}$, where $L^{(n)}$ is the descent of the $S_n$-equivariant line bundle $L^{\boxtimes n} \cong p_1^*(L) \otimes \cdots \otimes p_n^*(L)$ on $X^n$.
\end{definition}

Corollary 6.2 of \cite{Fogarty2} shows that 
\begin{eqnarray}\label{eqn:fogartypic}
\mathrm{Pic}\left(X^{[n]}\right) \cong \mathrm{Pic}(X) \oplus \mathbf{Z}\left(\frac{B}{2}\right),
\end{eqnarray}
where $\mathrm{Pic}\left(X\right) \subset \mathrm{Pic}\left(X^{[n]}\right)$ is embedded by $L \to L[n]$ and B is the locus of non-reduced schemes when $X$ has no irregularity.

We want to define another way to associate a line bundle on $X$ to one on $X^{[n]}$ via the universal family.
Let $Z^{[n]}\subset X^{[n]}\times X$ be the universal family over the Hilbert scheme which can be defined set-theoretically as
\[Z^{[n]} = \{\left(\xi,P \right): P \in \text{supp}(\xi) \}.\]
As a subset of the product, it has two natural projections (shown below).
\[\begin{diagram}
&& Z^{[n]} && \\
& \ldTo^{\mathrm{pr}_a} & & \rdTo^{\mathrm{pr}_b} &\\
X^{[n]} &&&& X
\end{diagram}\]

Using these, we pull the bundle back to the universal family, push it down onto the Hilbert scheme, and take the determinant (c.f. \cite{LP} \& \cite{CHW}). 

\begin{definition}\label{def:D_L}
Let $D_L[n]=\mathrm{det}\left(L^{[n]}\right)$, where $L^{[n]}:=\left(\mathrm{pr}_a \right)_{\ast} \mathrm{pr}_b^{\ast} L$ is the tautological bundle of $L$.
\end{definition}

By the Grothendieck-Riemann-Roch Theorem, it is not hard to see that $D_L[n]=L[n]-\frac{1}{2}B[n]$. 


\subsection{Nested Hilbert Schemes}

Hilbert schemes of points were generalized to nested Hilbert schemes of points in \cite{Kleppe}.
A nested Hilbert scheme of points parametrizes collections of subschemes $\left(z_m, \cdots, z_1\right)$ of finite length that are nested inside of each other $\left(\text{i.e. }z_m \supset  \cdots \supset z_1 \text{ as subschemes} \right)$.

The simplist nested Hilbert schemes of points parametrize collections of only two subschemes.
We will study two families of this type.

\subsubsection{Universal families}
One of the families is when the smaller subscheme has length one, which we generally will denote as $X^{[n,1]}$.
This nested Hilbert scheme is the universal family defined above, and it has been extensively studied so we will sometimes use the classical notation $Z^{[n]}$.
\begin{prop}[{\cite[Proposition 7.1]{Fogarty2}, \cite[Theorem 1.1]{Song}}]\label{prop:universal family}
The universal family $Z^{[n]}$ is a normal, irreducible, Cohen-Macaulay, singular ($n>2$), and non $\mathbf{Q}$-factorial variety of dimension $2n$. 
\end{prop}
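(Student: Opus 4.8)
The plan is to study $Z^{[n]}$ through the projection $p:=\mathrm{pr}_a\colon Z^{[n]}\to X^{[n]}$. By construction the universal family is flat over $X^{[n]}$, and since each fibre is a length-$n$ subscheme, $p$ is finite and flat of degree $n$; hence $\dim Z^{[n]}=\dim X^{[n]}=2n$. A scheme that is finite and flat over a smooth (hence Cohen--Macaulay) base is itself Cohen--Macaulay: $p_\ast\mathcal O_{Z^{[n]}}$ is locally free over $\mathcal O_{X^{[n]}}$, so each local ring of $Z^{[n]}$ is a free module over a regular local ring of the same dimension, forcing its depth to equal its dimension. In particular $Z^{[n]}$ satisfies Serre's condition $S_2$.

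For irreducibility I would use the residual morphism
\[
\phi\colon X^{[n,n-1]}\longrightarrow X^{[n]}\times X,\qquad (z,z')\longmapsto\bigl(z,\ \mathrm{supp}(\mathcal I_{z'}/\mathcal I_z)\bigr),
\]
which lands in $Z^{[n]}$ because $\mathrm{supp}(\mathcal I_{z'}/\mathcal I_z)\subset\mathrm{supp}(z)$. Its source $X^{[n,n-1]}$ is smooth and irreducible of dimension $2n$ by the results recalled in the introduction, and $\phi$ is an isomorphism over the dense open locus of reduced configurations (there the residual subscheme is unique). Hence $\phi\colon X^{[n,n-1]}\to Z^{[n]}$ is a surjective birational morphism, so $Z^{[n]}$ is irreducible, and in fact $\phi$ is a resolution of singularities which I will reuse below. (One could instead observe that over the reduced locus $U\subset X^{[n]}$ the map $p$ is étale of degree $n$ with monodromy the full symmetric group $S_n$, so $p^{-1}(U)$ is connected and dense.)

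For normality it remains, by Serre's criterion, to establish $R_1$. Over the locus where $z$ is \emph{curvilinear} at the marked point $x$ (i.e.\ $\mathcal O_{z,x}\cong\mathbf C[t]/(t^m)$) the scheme $Z^{[n]}$ is smooth; this is the key local input, and is standard. The complementary locus $N$ is stratified according to $k=\mathrm{length}_x(z)\ge 3$ and the isomorphism type of $\mathcal O_{z,x}$; using that the punctual Hilbert scheme parametrizing length-$k$ subschemes supported at a fixed point has dimension $k-1$ with dense curvilinear locus (so that its non-curvilinear part has dimension $\le k-2$), each such stratum has dimension $\le 2+(k-2)+2(n-k)=2n-k\le 2n-3$. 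Thus $\mathrm{Sing}(Z^{[n]})$ has codimension $\ge 2$ and $Z^{[n]}$ is normal. When $n>2$ the locus $N$ is nonempty and $Z^{[n]}$ is genuinely singular there: the fibre of $p$ over a subscheme $z$ having an $\mathfrak m_x^2$-fat point at $x$ is a length-$3$ scheme that is not a local complete intersection, which cannot be a fibre of a flat morphism with smooth source. For $n\le 2$ one checks smoothness directly; e.g.\ for $n=2$, $p$ realises $Z^{[2]}$ as a double cover of $X^{[2]}$ branched over the smooth divisor $B[2]$.

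\textbf{The main point, and the hardest, is the failure of $\mathbf Q$-factoriality for $n>2$.} I would derive it from the fact that the resolution $\phi\colon X^{[n,n-1]}\to Z^{[n]}$ is \emph{small}. It is an isomorphism over the curvilinear locus, while over a point $(z,x)$ of $N$ with $\mathcal O_{z,x}=A$ of length $k$ the fibre of $\phi$ is $\mathbf P(\mathrm{soc}\,A)$, of dimension $\le k-2$ since the socle is a proper submodule of $A$ contained in $\mathfrak m$. Combining this with the stratum dimension $\le 2n-k$ above gives $\dim\mathrm{Exc}(\phi)\le 2n-2$, i.e.\ codimension $\ge 2$ in the smooth variety $X^{[n,n-1]}$. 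Since a proper birational morphism from a normal variety onto a $\mathbf Q$-factorial variety has purely divisorial exceptional locus, $Z^{[n]}$ cannot be $\mathbf Q$-factorial. An alternative route --- the one taken in \cite{Song} --- is to compute that $\omega_{Z^{[n]}}$ is not $\mathbf Q$-Cartier, so that $Z^{[n]}$ is not even $\mathbf Q$-Gorenstein, and a fortiori not $\mathbf Q$-factorial. Either way the substance of this last step is the local analysis of $Z^{[n]}$ along the non-curvilinear locus, for which I would ultimately invoke \cite{Fogarty2} for normality and \cite{Song} for the singularity statements.
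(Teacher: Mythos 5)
The paper offers no proof of this proposition: it is stated purely as a citation to \cite{Fogarty2} and \cite{Song}, so there is nothing internal to compare against line by line. Your reconstruction is correct and essentially recovers the standard arguments: finiteness and flatness of $\mathrm{pr}_a$ over the smooth $X^{[n]}$ for the dimension and Cohen--Macaulay (hence $S_2$) claims; Brian\c{c}on's bound on the non-curvilinear punctual locus to get codimension $\geq 2$ singularities and hence $R_1$; and the non-lci length-three fibre $\mathrm{Spec}\,\mathcal{O}_{X,x}/\mathfrak m_x^2$ to force singularity for $n>2$. The one place where your route genuinely diverges from the cited sources is $\mathbf Q$-factoriality: Fogarty's original argument (which this paper alludes to in Section 6) exhibits the two irreducible Weil divisor components of the nonreduced locus of $Z^{[n]}$ and shows neither is $\mathbf Q$-Cartier, via his computation of the Picard group of a large open subset; you instead use that $X^{[n,n-1]}\cong\mathbf P(\omega_{Z^{[n]}})\to Z^{[n]}$ is a small resolution with nonempty exceptional locus (fibres $\mathbf P(\mathrm{soc}\,\mathcal O_{z,x})$ over the non-Gorenstein stratum, total dimension $\leq 2n-2$), which is incompatible with $\mathbf Q$-factoriality of the normal target. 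Your argument is cleaner and more conceptual, at the cost of importing the identification of $X^{[n,n-1]}$ with the projectivized dualizing sheaf and the fact that a proper birational morphism onto a normal $\mathbf Q$-factorial variety has divisorial exceptional locus; Fogarty's is more explicit and also produces the concrete non-$\mathbf Q$-Cartier divisors that Section 6 of this paper actually uses. Do make sure the order of deductions is respected: normality of $Z^{[n]}$ must be in hand before you apply Zariski's main theorem to conclude that $\phi$ is an isomorphism over the Gorenstein locus, as you have arranged.
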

Fogarty computed the Picard group of the universal family for any smooth projective surface.
We state the result under our additional assumption of irregularity zero.
\begin{prop}[{\cite[Theorem 7.6]{Fogarty2}}]
$\mathrm{Pic}\left(X^{[n,1]}\right)$ is spanned by the pull backs from the two natural projections 
so that
\[ \mathrm{Pic}\left(X^{[n,1]}\right) \cong \mathrm{Pic} \left(X^{[n]}\right) \oplus \mathrm{Pic}(X) \cong \mathrm{Pic}(X)^{\oplus 2} \oplus \mathbf{Z}.\]
\end{prop}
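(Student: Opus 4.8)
This is a theorem of Fogarty \cite{Fogarty2}; let me indicate how the proof goes. Write $Z^{[n]} = X^{[n,1]}$ with its two projections $\mathrm{pr}_a\colon Z^{[n]}\to X^{[n]}$ and $\mathrm{pr}_b\colon Z^{[n]}\to X$, and consider the pullback homomorphism
\[
\Phi\colon \text{Pic}\bigl(X^{[n]}\bigr)\oplus\text{Pic}(X)\longrightarrow\text{Pic}\bigl(Z^{[n]}\bigr),\qquad (M,L)\longmapsto \mathrm{pr}_a^{\ast}M\otimes\mathrm{pr}_b^{\ast}L.
\]
By Fogarty's formula \eqref{eqn:fogartypic} one has $\text{Pic}(X^{[n]})\cong\text{Pic}(X)\oplus\mathbf{Z}$, so it is enough to show $\Phi$ is an isomorphism. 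The plan is to prove injectivity directly, and surjectivity after passing to the divisor class group, which is permissible because $Z^{[n]}$ is normal by Proposition \ref{prop:universal family}.

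For injectivity, note that $\mathrm{pr}_a$ is the universal family, hence finite and flat of degree $n$; its norm map $N$ satisfies $N\circ\mathrm{pr}_a^{\ast}=n\cdot\text{id}$, so $\mathrm{pr}_a^{\ast}$ is injective (the relevant Picard groups being torsion-free in the cases of interest). To see that $\mathrm{pr}_a^{\ast}\text{Pic}(X^{[n]})$ and $\mathrm{pr}_b^{\ast}\text{Pic}(X)$ meet only in $0$, suppose $\mathrm{pr}_a^{\ast}M\cong\mathrm{pr}_b^{\ast}L$. On a fibre $\mathrm{pr}_b^{-1}(p)$ the right-hand side is trivial, so $\mathrm{pr}_a^{\ast}M$ restricts trivially there; intersecting with enough curves inside $\mathrm{pr}_b^{-1}(p)$ — those obtained by letting a single reduced point of $z$ travel along a curve of $X$ (chosen through $p$ in order to detect the boundary class) while the remaining points stay fixed and general — forces $M\equiv 0$ and hence $M\cong\mathcal{O}$. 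Then $\mathrm{pr}_b^{\ast}L\cong\mathcal{O}$; since $\mathrm{pr}_b$ is a proper surjection with connected fibres onto the normal variety $X$, we get $(\mathrm{pr}_b)_{\ast}\mathcal{O}_{Z^{[n]}}=\mathcal{O}_X$, and the projection formula gives $L\cong\mathcal{O}$.

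Surjectivity is the crux, and this is where the main obstacle lies. Since $Z^{[n]}$ is normal, $\text{Pic}(Z^{[n]})\hookrightarrow\text{Cl}(Z^{[n]})$, and the class group is unchanged by deleting a closed subset of codimension $\geq 2$. Let $U\subseteq X^{[n]}$ be the open locus of subschemes having at most one non-reduced point, that point of length $2$; then $X^{[n]}\setminus U$ has codimension $\geq 2$ (the small values of $n$ being treated separately), and since $\mathrm{pr}_a$ is finite so does $Z^{[n]}\setminus\mathrm{pr}_a^{-1}(U)$, so it suffices to compute $\text{Cl}\bigl(\mathrm{pr}_a^{-1}(U)\bigr)$. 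Over the reduced locus $U\setminus B=X^{[n]}\setminus B$ the universal family is an $n$-sheeted \'etale cover of the free $S_n$-quotient parametrising $n$ distinct points, and this accounts for the two copies of $\text{Pic}(X)$ — the diagonal classes pulled back along $\mathrm{pr}_a$ and the residual-point classes pulled back along $\mathrm{pr}_b$. The only further prime divisor, supported over $U\cap B$, is $\mathrm{pr}_a^{-1}(B)$, and a local computation at a generic double point — where $\mathrm{pr}_a$ has ramification profile $(2,1,\dots,1)$ — matches its contribution with that of the line bundle $\mathrm{pr}_a^{\ast}(B[n]/2)$, which already lies in the image of $\Phi$ via \eqref{eqn:fogartypic}. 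Hence $\text{Cl}(Z^{[n]})$ equals the image of $\Phi$, which is sandwiched between $\text{Pic}(Z^{[n]})$ and $\text{Cl}(Z^{[n]})$; so all three groups coincide and $\Phi$ is an isomorphism. The delicate part — the reason this needs genuine work rather than the routine ``locally factorial blow-up'' bookkeeping — is precisely this accounting over the boundary $B$: because $\mathrm{pr}_a$ has disconnected fibres and $Z^{[n]}$ is singular and not $\mathbf{Q}$-factorial, one must rule out spurious (in particular torsion) divisor classes coming from the $S_n$-cover and verify that nothing beyond $\mathrm{pr}_a^{\ast}(B[n])$ survives.
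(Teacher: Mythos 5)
The paper does not actually prove this proposition; it is quoted verbatim from Fogarty \cite{Fogarty2}, so there is no in-paper argument to compare against. Judged on its own terms, your sketch has a fatal structural flaw in the surjectivity step.

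You reduce surjectivity to showing that the image of $\Phi$ equals $\text{Cl}(Z^{[n]})$ and then close the sandwich $\text{im}\,\Phi\subseteq\text{Pic}(Z^{[n]})\subseteq\text{Cl}(Z^{[n]})$. But $\text{Pic}(Z^{[n]})\neq\text{Cl}(Z^{[n]})$: as the paper itself records (Proposition \ref{prop:universal family} and the discussion at the start of Section \ref{sec: univ fam}), $Z^{[n]}$ is not $\mathbf{Q}$-factorial, and the witness is exactly the locus you are bookkeeping. The preimage $\mathrm{pr}_a^{-1}(B[n])$ is \emph{not} a prime divisor: it breaks into two irreducible components, the locus where the marked point sits at the non-reduced point of $\xi$ and the locus where it sits at a reduced point, and neither component is $\mathbf{Q}$-Cartier. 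So over $U\cap B$ you pick up two independent Weil classes, not one, and $\text{Cl}(Z^{[n]})$ has rank $2\rho(X)+2$ while $\text{Pic}(Z^{[n]})$ has rank $2\rho(X)+1$. Your assertion that ``the only further prime divisor supported over $U\cap B$ is $\mathrm{pr}_a^{-1}(B)$'' is where the error enters, and once it is corrected the sandwich cannot close: one must instead decide \emph{which} Weil classes are Cartier, i.e.\ do a local computation at the singular points of $Z^{[n]}$ lying over the non-reduced point of a $\xi\in B$. That local analysis is the actual content of Fogarty's proof (alternatively one can argue on $X^{[n+1,n]}\to Z^{[n]}$ as the paper does for Proposition \ref{prop:Picard group} and descend), and it is missing here.

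Two smaller points. The injectivity argument leans on ``numerically trivial implies trivial'' and on torsion-freeness of the Picard groups; both fail for general surfaces with $q=0$ (e.g.\ Enriques surfaces have $2$-torsion in $\text{Pic}$), whereas the proposition is stated for all such surfaces. And the $H^1(S_{n-1},\mathbf{C}^*)$ ambiguity you flag at the end is not a loose end one can ``verify away'' separately — it is part of the same Cartier-versus-Weil issue above.
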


\subsubsection{Smooth nested Hilbert schemes}
The other family which we will study is when the lengths of the two subschemes differ by one.
This nested Hilbert scheme can be defined set-theoretically as
\[X^{[n+1,n]} = \{ (z,z') \in X^{[n+1]} \times X^{[n]} \ | \ z' \text{ is a closed subscheme of } z \}\]
and is a smooth variety of dimension  $2n+2$ by \cite[Chapter 1]{Cheah}.
These are a natural choice to study as they are the only smooth nested Hilbert schemes of points on a surface other than the Hilbert schemes themselves also by \cite{Cheah}. 


Similar to the universal family, the product $X^{[n+1]} \times X^{[n]}$ equips $X^{[n+1,n]}$ with two surjective projection maps. 
\[\begin{diagram}
&& X^{[n+1,n]} && \\
& \ldTo^{\mathrm{pr}_a} & & \rdTo^{\mathrm{pr}_b} &\\
X^{[n+1]} &&&& X^{[n]}
\end{diagram}\]
We will use these to compute the Picard groups of these nested Hillbert schemes.

\subsubsection{Residue maps}

The residue map 
\[ \mathrm{res}: X^{[n+1,n]} \to X \] is defined set-theoretically sending $(z,z')$ to $P$ where $P$ is the unique point $z$ and $z'$ differ.
It is a morphism by \cite[Section 2]{ES}.




\section{Picard group of the smooth nested Hilbert schemes}
\label{sec: pic}

In this section, we will compute the Picard group of $X^{[n+1,n]}$ for a smooth irreducible projective surface $X$ under our standing assumption of irregularity zero.

\begin{prop}\label{prop:Picard group}
$\mathrm{Pic}(X^{[n+1,n]})$ is generated by the pull backs of $\mathrm{Pic}(X^{[n+1]})$ and $\mathrm{Pic}(X^{[n]})$ along $\mathrm{pr}_a$ and $\mathrm{pr}_b$ respectively so
\[ \mathrm{Pic}(X^{[n+1,n]}) \cong \mathrm{Pic}(X)^{2}\oplus \mathbf{Z}^{2}. \]
As $\mathrm{Pic}^0(X)=0$, $\mathrm{Pic}(X^{[n+1,n]})$ has no continuous part so
\[ \rho(X^{[n+1,n]}) = 2(\rho(X)+1). \] 
\end{prop}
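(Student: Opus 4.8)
The plan is to compute $\mathrm{Pic}(X^{[n+1,n]})$ by exhibiting $X^{[n+1,n]}$ as an explicit blow-up (or, more precisely, by using the structure of the projection maps) and then invoking standard exact sequences for Picard groups under birational morphisms. The cleanest route uses the result of Ellingsrud--Str\o mme cited in the excerpt: $X^{[n+1,n]}$ is the blow-up of $X^{[n]}\times X$ along the universal family $Z^{[n]}$, with the composition $X^{[n+1,n]}\to X^{[n]}\times X$ factoring the two projections appropriately. First I would recall that for $X$ of irregularity zero, $\mathrm{Pic}(X^{[n]})\cong\mathrm{Pic}(X)\oplus\mathbf{Z}(B/2)$ by Fogarty's theorem~\eqref{eqn:fogartypic}, and $\mathrm{Pic}(X)$ is finitely generated and discrete since $\mathrm{Pic}^0(X)=0$; hence $\mathrm{Pic}(X^{[n]}\times X)\cong\mathrm{Pic}(X^{[n]})\oplus\mathrm{Pic}(X)\cong\mathrm{Pic}(X)^{\oplus 2}\oplus\mathbf{Z}$ by the K\"unneth-type formula for Picard groups (valid because $H^1(\mathcal{O})$ vanishes on both factors, so there is no correction term).

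Next I would apply the blow-up formula: if $\pi\colon \widetilde{Y}\to Y$ is the blow-up of a smooth variety $Y$ along a smooth connected center of codimension $\geq 2$ with irreducible exceptional divisor $E$, then $\mathrm{Pic}(\widetilde{Y})\cong \pi^{*}\mathrm{Pic}(Y)\oplus\mathbf{Z}\cdot[E]$. Here $Y=X^{[n]}\times X$, which is smooth, and $Z^{[n]}\subset X^{[n]}\times X$ is irreducible of codimension $2$; one must check it is smooth, but in fact the blow-up description of \cite{ES} builds in that $X^{[n+1,n]}$ is smooth and the exceptional divisor is irreducible, so the formula applies even if one prefers to phrase it via the general statement that $\pi^{*}$ is injective on Picard groups with cokernel generated by the components of the exceptional locus. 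This yields $\mathrm{Pic}(X^{[n+1,n]})\cong\mathrm{Pic}(X^{[n]}\times X)\oplus\mathbf{Z}\cong\mathrm{Pic}(X)^{\oplus 2}\oplus\mathbf{Z}^{2}$, and the generators can be matched with the pullbacks along $\mathrm{pr}_a$ and $\mathrm{pr}_b$ by tracking how the two projections $X^{[n+1,n]}\to X^{[n+1]}$ and $X^{[n+1,n]}\to X^{[n]}$ factor through $\pi$ and $\mathrm{res}$. The statement about $\rho$ is then immediate: since all these Picard groups are finitely generated abelian groups with no divisible part (as $\mathrm{Pic}^{0}(X)=0$ forces $\mathrm{Pic}^{0}$ of every space in sight to vanish, by the same $H^1(\mathcal{O})$ vanishing on Hilbert schemes of points on irregularity-zero surfaces), we get $N^1=\mathrm{Pic}\otimes\mathbf{R}$ and $\rho(X^{[n+1,n]})=\mathrm{rk}\,\mathrm{Pic}(X^{[n+1,n]})=2(\rho(X)+1)$.

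The main obstacle I anticipate is \emph{not} the algebra but the bookkeeping needed to identify the abstract generators with the geometrically meaningful classes $\mathrm{pr}_a^{*}\mathrm{Pic}(X^{[n+1]})$ and $\mathrm{pr}_b^{*}\mathrm{Pic}(X^{[n]})$ — i.e., showing that these pullbacks together span, rather than just that the rank is correct. One has to verify that $\mathrm{pr}_a^{*}$ and $\mathrm{pr}_b^{*}$ are each injective (which follows because both projections are surjective with connected fibers, or alternatively from the blow-up picture), that the exceptional class of $\pi$ is visible among these pullbacks (the class $B[n+1]$ of nonreduced schemes on $X^{[n+1]}$ pulls back to something involving the exceptional divisor and the residue-point divisor), and that no relations are introduced. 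An alternative, perhaps more self-contained approach that sidesteps \cite{ES} is to argue directly: use that $\mathrm{pr}_b\colon X^{[n+1,n]}\to X^{[n]}$ has fibers generically isomorphic to $X$ blown up at $n$ points (adding one point to the length-$n$ subscheme), run the Leray/long-exact-sequence argument for $\mathrm{Pic}$ of a fibration with rational fibers over a simply-connected-in-the-relevant-sense base, and pick up exactly the two extra $\mathbf{Z}$'s from $B[n+1]$-type and exceptional-type divisors; this is closer in spirit to Fogarty's original computation but requires more care about the non-generic fibers.
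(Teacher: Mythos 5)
Your proposal is correct and rests on the same skeleton as the paper's proof: the Ellingsrud--Str\o mme identification of $X^{[n+1,n]}$ with $\mathrm{Bl}_{Z^{[n]}}(X^{[n]}\times X)$, the K\"unneth-type splitting $\mathrm{Pic}(X^{[n]}\times X)\cong\mathrm{Pic}(X^{[n]})\oplus\mathrm{Pic}(X)$ using irregularity zero, and Fogarty's computation of $\mathrm{Pic}(X^{[n]})$. The one substantive divergence is exactly the point you flag yourself: the center $Z^{[n]}$ is \emph{singular} for $n>2$ (Proposition \ref{prop:universal family}), so the smooth-center blow-up formula does not apply as stated. The paper's remedy is to delete $Z^{[n]}_{\mathrm{sing}}$, use the Ellingsrud--Str\o mme stratification, the estimate $\mathrm{codim}\,W_{i,n}\geq 2i$, and the fiber dimensions of $\phi$ to check that both $U=(X^{[n]}\times X)\setminus Z^{[n]}_{\mathrm{sing}}$ and $\phi^{-1}(U)$ are large open subsets (complement of codimension at least two), and then apply the smooth formula to $\mathrm{Bl}_{Z^{[n]}_{\mathrm{sm}}}(U)$. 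Your remedy --- the general fact that for a proper birational morphism of smooth projective varieties $\pi^{*}$ is injective on $\mathrm{Pic}$ with cokernel freely generated by the prime exceptional divisors --- is also valid and arguably cleaner, but note that the hypothesis it needs, namely that the exceptional locus of $\phi$ is the single irreducible divisor $E$, is itself proved by essentially the same fiber-dimension and codimension estimates: the preimages of the deeper strata $W_{i,n}$, $i\geq 3$, have dimension at most $2n+1-i<2n+1$ and hence, by purity of the exceptional locus, contribute no extra divisorial components. So the two arguments cost about the same; yours packages the stratification into the irreducibility of $E$, while the paper packages it into the large-open-subset reduction. The final bookkeeping you anticipate (writing $E=\mathrm{pr}_a^{*}(B[n+1]/2)-\mathrm{pr}_b^{*}(B[n]/2)$ so that the pullbacks along $\mathrm{pr}_a$ and $\mathrm{pr}_b$ actually generate) is precisely how the paper concludes.
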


The method of proof is classical, but for the benefit of non-experts we give some details.
We calculate the Picard group of each nested Hilbert scheme on an open set that is large enough to not lose information about Picard group and that is small enough to not contain any really troublesome points (similar to Fogarty's approach \cite[Section 7]{Fogarty2}).
Specifically, we show that an open set of the nested Hilbert scheme is the blow up along a smooth subvariety of an open subset of a smooth variety; since we know the Picard group of that variety, we subsequently know the Picard group of the nested Hilbert scheme.

\subsection{Stratification of the universal family and blow up structure of nested Hilbert schemes}

Let's recall the construction of the stratification of $X^{[n]}\times X$ in \cite[Section 3]{ES}.
\begin{definition}
Let $W_{i,n}$ be the set of points $(\xi,P)\in X^{[n]}\times X$ such that the ideal $I_{Z^{[n]}}$ needs exactly $i$ generators at $\left(\xi,P\right)$. Equivalently,
\[ W_{i,n}= \{ (\xi,P) \in X^{[n]}\times X : \mathrm{dim}_{\mathbb{C}} \left( I_{\xi}(P) \right)=i \}. \]
where $I_{\xi}(P)=I_{\xi}\otimes \kappa(P)$ and $\kappa(P)$ is the residue field of $P$.
\end{definition}

\begin{remark}
To give this definition, we implicitly use the canonical isomorphism:
\[ I_{Z^{[n]},(\xi,P)} \cong I_{\xi,P}. \]
This can be obtained by restricting the ideal sheaf sequence of $Z^{[n]}$ inside $X^{[n]}\times X$ to $\{\xi\} \times X$:
\[ 0 \to I_{Z^{[n]}}|_{\xi \times X}\cong I_{\xi} \to \mathcal{O}_{X} \to \mathcal{O}_{\xi}\to 0.\]
Hence by further restricting to $\{\xi\} \times \{P\}$, one gets the desired isomorphism.
\end{remark}

It's easy to see that $W_{1,n}=(X^{[n]}\times X)\setminus Z^{[n]}$. By \cite[Proposition 7.1]{Fogarty2}, $W_{2,n}$ is the smooth locus of $Z^{[n]}$, which we denote by $Z^{[n]}_{\mathrm{sm}}$. For $i\geq 3$, it is known by the proof of \cite[Proposition 2.2]{ES} that
\begin{eqnarray}\label{eqn:codimension estimate}
\mathrm{codim}(W_{i,n},X^{[n]}\times X) \geq 2i-2. 
\end{eqnarray}

\begin{definition}
Denote $Z^{[n]}_{\mathrm{sing}}:= Z^{[n]} \backslash Z^{[n]}_{\mathrm{sm}}$ and $U:=(X^{[n]}\times X)\setminus Z^{[n]}_{\mathrm{sing}}=W_{1,n}\cup W_{2,n}$.
\end{definition}

Consider the map $\phi=\mathrm{pr}_b \times \mathrm{res}:X^{[n+1,n]} \to X^{[n]}\times X$. We want to investigate the fiber dimension over $W_{i,n}$. Let $(\eta,\xi) \in X^{[n+1,n]}$ and $P=\mathrm{res}(\eta,\xi) \in X$. 
Then there is a natural short exact sequence
\[ 0 \to I_{\eta}\to I_{\xi} \to \kappa(P) \to 0\]
Hence, $I_{\eta}(P)$ is a hypersurface inside $I_{\xi}(P)$ consisting of functions vanishing at $P$, and $I_{\eta}(Q)\cong I_{\xi}(Q)$ for any other $Q\in X$. 
Therefore the fiber $\phi^{-1}(\xi,P)$ is naturally identified with the projective space $\mathbf{P}(I_{\xi}(P))$. 
This implies that the fiber dimension over $W_{i,n}$ is $i-1$.
In the proof, we will use this to show that $\phi^{-1}(U)$ is a large open set of the nested Hilbert scheme.

Using the description of fibers, Ellingsurd and Str{\o}mme proved the following:
\begin{prop}({\cite[Proposition 2.2]{ES}})\label{prop:blow up of universal family}
The map $\phi:X^{[n+1,n]} \to X^{[n]}\times X$ is canonically isomorphic to the blowing up of $X^{[n]}\times X$ along $Z^{[n]}$ and the exceptional divisor $E$ corresponds to the divisor consists of $(\eta,\xi)$ where $\eta$ and $\xi$ have the same support. In particular, over $W_{2,n}=Z^{[n]}_{\mathrm{sm}}$, the morphism $\phi$ is a $\mathbf{P}^1$-bundle.
\end{prop}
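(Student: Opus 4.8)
\emph{Proof plan.} The plan is to verify the universal property of the blow-up. Write $Y = X^{[n]}\times X$; then $\mathrm{Bl}_{Z^{[n]}}(Y)$ is the terminal object among $Y$-schemes $T\to Y$ for which the inverse image of $I_{Z^{[n]}}$ becomes an invertible ideal. So it suffices to (1) show that $\phi^{-1}(I_{Z^{[n]}})\cdot\mathcal{O}_{X^{[n+1,n]}}$ is an invertible ideal sheaf, producing a $Y$-morphism $\Psi\colon X^{[n+1,n]}\to\mathrm{Bl}_{Z^{[n]}}(Y)$ that necessarily carries the exceptional divisor of the blow-up to the vanishing locus of that ideal, and then (2) prove that $\Psi$ is an isomorphism. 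The divisor $E$ in the statement will be identified in the course of step (1).

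For step (1) I would work on $X^{[n+1,n]}\times X$ with the two universal subschemes $\mathcal{Z}_{n+1}\supset\mathcal{Z}_n$. Let $\Gamma$ be the graph of $\text{res}$, and $\iota\colon X^{[n+1,n]}\xrightarrow{\ \sim\ }\Gamma\hookrightarrow X^{[n+1,n]}\times X$ the corresponding section. Applying the length‑one sequence $0\to I_\eta\to I_\xi\to k(P)\to 0$ in families, the quotient $\mathcal{K}:=I_{\mathcal{Z}_n}/I_{\mathcal{Z}_{n+1}}$ is flat over $X^{[n+1,n]}$ of relative length one and has support the section $\Gamma$, so $\mathcal{K}\cong\iota_\ast\mathcal{N}$ for a line bundle $\mathcal{N}$ on $X^{[n+1,n]}$. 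Since $\phi=(\mathrm{pr}_b\times\mathrm{id}_X)\circ\iota$ and $I_{Z^{[n]}}$ is flat over $X^{[n]}$ (being the kernel in $0\to I_{Z^{[n]}}\to\mathcal{O}\to\mathcal{O}_{Z^{[n]}}\to 0$ with $\mathcal{O}_{Z^{[n]}}$ flat over $X^{[n]}$), base change gives $(\mathrm{pr}_b\times\mathrm{id}_X)^\ast I_{Z^{[n]}}\cong I_{\mathcal{Z}_n}$ and hence $\phi^\ast I_{Z^{[n]}}\cong\iota^\ast I_{\mathcal{Z}_n}$, compatibly with the natural maps to $\mathcal{O}_{X^{[n+1,n]}}$. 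Applying $\iota^\ast$ to $0\to I_{\mathcal{Z}_{n+1}}\to I_{\mathcal{Z}_n}\to\iota_\ast\mathcal{N}\to 0$ yields a surjection $\iota^\ast I_{\mathcal{Z}_n}\twoheadrightarrow\mathcal{N}$ (using $\iota^\ast\iota_\ast=\mathrm{id}$ for the closed immersion $\iota$). Now the residual point always lies in the support of $\mathcal{Z}_{n+1}$, so $\Gamma\subseteq\mathcal{Z}_{n+1}$ scheme‑theoretically, i.e.\ $I_{\mathcal{Z}_{n+1}}|_\Gamma=0$; this forces the natural map $\iota^\ast I_{\mathcal{Z}_n}\to\mathcal{O}_{X^{[n+1,n]}}$ to factor through $\iota^\ast I_{\mathcal{Z}_n}\twoheadrightarrow\mathcal{N}$. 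Therefore $\phi^{-1}(I_{Z^{[n]}})\cdot\mathcal{O}_{X^{[n+1,n]}}$ is the image of a nonzero map $\mathcal{N}\to\mathcal{O}_{X^{[n+1,n]}}$, hence an invertible ideal, whose cokernel is $\mathcal{O}_{\mathcal{Z}_n\cap\Gamma}$; its support is exactly the locus where $\text{res}(\eta,\xi)\in\mathrm{supp}(\xi)$, i.e.\ where $\eta$ and $\xi$ have the same support. Call this divisor $E$; it is Cartier since $X^{[n+1,n]}$ is smooth. Over $W_{2,n}=Z^{[n]}_{\mathrm{sm}}$ this becomes the blow-up of a smooth codimension‑two subvariety, whence the $\mathbf{P}^1$‑bundle assertion.

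For step (2) I would show $\Psi$ is an isomorphism by producing its inverse at the level of moduli functors: on any scheme over $\mathrm{Bl}_{Z^{[n]}}(Y)$, the tautological surjection of $I_{Z^{[n]}}$ onto the exceptional line bundle, transported to the residual section via the identifications above, is precisely a line‑bundle quotient $\iota^\ast I_{\mathcal{Z}_n}\twoheadrightarrow L$, and by adjunction this is the datum of a length‑one extension $\mathcal{Z}_{n+1}\supset\mathcal{Z}_n$ with residual section the given one, i.e.\ a point of $X^{[n+1,n]}$; one checks this is inverse to $\Psi$. I expect this to be the main obstacle: because $Z^{[n]}$ is singular, $\mathrm{Bl}_{Z^{[n]}}(Y)$ is not a priori normal, and the universal property alone is insufficient (a proper birational map to a normal variety that is an isomorphism off a codimension‑two set need not be an isomorphism). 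The alternative route is to verify directly that $\mathrm{Bl}_{Z^{[n]}}(Y)$ is normal and that $\Psi$ is quasi-finite — $\Psi$ is already an isomorphism over the large open set $\phi^{-1}(U)$ where both sides are the honest blow-up of the smooth center $W_{2,n}$, and the estimate $\text{codim}(W_{i,n},Y)\geq 2i$ controls the complement — so that $\Psi$ is finite and birational onto a normal variety, hence an isomorphism; this is where the real work lies.
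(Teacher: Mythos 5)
First, a point of comparison: the paper offers no proof of this proposition at all --- it is quoted from Ellingsrud--Str{\o}mme \cite{ES}, and the only supporting material in the text is the pointwise fibre description $\phi^{-1}(\xi,P)\cong\mathbf{P}(I_\xi(P))$ coming from $0\to I_\eta\to I_\xi\to k(P)\to 0$. So the comparison is really with \cite{ES}, and your plan is essentially a reconstruction of that argument. Step (1) is sound: $I_{\mathcal{Z}_n}/I_{\mathcal{Z}_{n+1}}=\ker(\mathcal{O}_{\mathcal{Z}_{n+1}}\twoheadrightarrow\mathcal{O}_{\mathcal{Z}_n})$ is flat of relative length one because both outer terms are flat, the factorization through $\mathcal{N}$ is correct, and a nonzero map from a line bundle to $\mathcal{O}$ on the integral variety $X^{[n+1,n]}$ is injective, so the image ideal is invertible. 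You are also right that step (2) is the real content and that a Zariski-main-theorem shortcut is unavailable; but your proposed ``alternative route'' is circular, since normality of $\mathrm{Bl}_{Z^{[n]}}(X^{[n]}\times X)$ is a consequence of the proposition, not something one can verify directly beforehand. The first route you sketch does close the gap once pushed one step further: the moduli description identifies $X^{[n+1,n]}$ with $\mathbf{P}(I_{Z^{[n]}})=\mathrm{Proj}\,\mathrm{Sym}(I_{Z^{[n]}})$ over $X^{[n]}\times X$; the tautological surjection $\pi^\ast I_{Z^{[n]}}\twoheadrightarrow \mathcal{O}(1)$ on the blow-up then gives a morphism $\mathrm{Bl}_{Z^{[n]}}(X^{[n]}\times X)\to X^{[n+1,n]}$ over the base, and the two composites with $\Psi$ are the identity --- one by the uniqueness clause in the universal property of the blow-up, the other because it is an endomorphism of the separated integral scheme $X^{[n+1,n]}$ over $X^{[n]}\times X$ agreeing with the identity on the dense open set lying over $W_{1,n}$. (Equivalently, as in \cite{ES}: the blow-up is a closed subscheme of $\mathrm{Proj}\,\mathrm{Sym}(I_{Z^{[n]}})\cong X^{[n+1,n]}$ of the same dimension dominating the base, and the ambient scheme is integral, so the inclusion is an equality.) With that substitution your outline is a complete proof.
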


Since blowing up commutes with flat base change and $U\cap Z^{[n]}=W_{2,n}\cap Z^{[n]}=Z^{[n]}_{\mathrm{sm}}$, we have the following lemma.
\begin{lem}\label{lem:blow up smooth locus}
$\phi^{-1}(U)$ is canonically isomorphic to $ \mathrm{Bl}_{Z^{[n]}_{\mathrm{sm}}}(U)$ via the restriction of $\phi$ to $\phi^{-1}(U)$.
\end{lem}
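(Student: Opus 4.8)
\textbf{Proof strategy for Lemma \ref{lem:blow up smooth locus}.}
The plan is to deduce the statement from Proposition \ref{prop:blow up of universal family} together with the elementary fact that blowing up commutes with flat base change. First I would recall the precise form of the base-change statement: if $f\colon Y'\to Y$ is the blow up of $Y$ along a closed subscheme $Z\subset Y$, and $j\colon V\hookrightarrow Y$ is any open immersion (in particular a flat morphism), then $f^{-1}(V)=Y'\times_Y V$ is canonically isomorphic to $\mathrm{Bl}_{Z\cap V}(V)$, where the intersection is taken scheme-theoretically. This is standard (e.g. Hartshorne, or Stacks project), and since open immersions are flat it applies verbatim here.

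Now I would apply this with $Y=X^{[n]}\times X$, the blow up $f=\phi\colon X^{[n+1,n]}\to X^{[n]}\times X$ along $Z=Z^{[n]}$ (which is indeed the blow up by Proposition \ref{prop:blow up of universal family}), and the open immersion $j\colon U\hookrightarrow X^{[n]}\times X$ where $U=(X^{[n]}\times X)\setminus Z^{[n]}_{\mathrm{sing}}=W_{1,n}\cup W_{2,n}$. Base change gives a canonical isomorphism $\phi^{-1}(U)\cong \mathrm{Bl}_{Z^{[n]}\cap U}(U)$ realized by the restriction of $\phi$. It then remains to identify $Z^{[n]}\cap U$. Since $U$ is obtained by removing $Z^{[n]}_{\mathrm{sing}}$, we have $Z^{[n]}\cap U=Z^{[n]}\setminus Z^{[n]}_{\mathrm{sing}}=Z^{[n]}_{\mathrm{sm}}$ as sets; scheme-theoretically, $Z^{[n]}_{\mathrm{sm}}$ is by definition the open subscheme of $Z^{[n]}$ lying over $U$, so the intersection is $Z^{[n]}_{\mathrm{sm}}$ as a scheme as well. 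Substituting, $\phi^{-1}(U)\cong \mathrm{Bl}_{Z^{[n]}_{\mathrm{sm}}}(U)$, which is exactly the claim. (As a sanity check, Proposition \ref{prop:blow up of universal family} says $\phi$ is a $\mathbf{P}^1$-bundle over $W_{2,n}=Z^{[n]}_{\mathrm{sm}}$, consistent with blowing up a codimension-two smooth center, since $Z^{[n]}_{\mathrm{sm}}$ has codimension $2$ in $X^{[n]}\times X$.)

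I do not anticipate a serious obstacle: the lemma is essentially a bookkeeping consequence of the flat-base-change property of blow ups, and the only point requiring a line of care is that the scheme-theoretic intersection $Z^{[n]}\cap U$ really is $Z^{[n]}_{\mathrm{sm}}$ and not some thickening — but this is immediate because $U$ is an open subset and $Z^{[n]}_{\mathrm{sm}}$ was defined precisely as the part of $Z^{[n]}$ over that open set. The remaining assertions of the lemma beyond what appears in its statement — in particular that $Z^{[n]}_{\mathrm{sm}}$ is smooth, hence a legitimate ``smooth subvariety'' center, and that $\phi^{-1}(U)$ is a large open subset of $X^{[n+1,n]}$ — are recorded separately (the smoothness is the cited Proposition 7.1 of \cite{Fogarty2}, and the ``large open set'' claim follows from the codimension estimate \eqref{eqn:codimension estimate} applied in the proof of Proposition \ref{prop:Picard group}), so they need not be reproved here.
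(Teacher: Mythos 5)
Your proposal is correct and is essentially the paper's own argument: the authors also justify the lemma in one line by noting that blowing up commutes with flat base change and that $U\cap Z^{[n]}=W_{2,n}=Z^{[n]}_{\text{sm}}$. Your version merely spells out the same reasoning in more detail, including the (correct) check that the scheme-theoretic intersection is $Z^{[n]}_{\text{sm}}$ itself.
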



\subsection{Proof of the proposition}
Now we have all of the ingredients needed, so we can start the proof of Proposition \ref{prop:Picard group}. In the proof, an open subset of a variety whose complement has codimension at least two is called \textit{a large open subset}.
\begin{proof}
First, we show that $U$ and $\phi^{-1}(U)$ are large open sets in $X^{[n]}\times X$ and $X^{[n+1,n]}$ respectively. Since $U=W_{1,n}\cup W_{2,n}$, by equation (\ref{eqn:codimension estimate}), 
\[ \mathrm{codim}(X^{[n]}\times X\setminus U,X^{[n]}\times X) \geq \mathrm{min}_{i\geq 3}\{2i-2\}\geq 2.\] 
Therefore, $U$ is a large open subset of $X^{[n]}\times X$. 

Denote $X^{[n+1,n]}_{\ast}=\phi^{-1}(U)$. 
As the fiber dimension of $\phi$ over $W_{i,n}$ is $(i-1)$, 
\[ \mathrm{codim}(\phi^{-1}(W_{i,n}),X^{[n+1,n]}) \geq \mathrm{min}_{i\geq 3}\{(2i-2)-(i-1)\}\geq 2\]
for $i \geq 3$.
Hence, $X^{[n+1,n]}_{\ast}$ is also a large open subset of $X^{[n+1,n]}$.

Since $X^{[n+1,n]}$ and $X^{n}\times X$ are smooth, by \cite[Proposition 2.6.5]{Hart}, remove a closed subset of codimension at least two does not affect the Picard group. Therefore we have
\begin{eqnarray*}
\mathrm{Pic} \left( X^{[n]} \times X\right)\cong \mathrm{Pic}(U), \\
\mathrm{Pic} \left( X^{[n+1,n]}\right) \cong \mathrm{Pic} \left( X_\ast^{[n+1,n]}\right).
\end{eqnarray*}

Note that, by Lemma \ref{lem:blow up smooth locus}, $X^{[n+1,n]}_{\ast}$ is isomorphic to the blow up of $U$ along the smooth locus of the universal family.
Therefore, as $U$ is smooth and $Z^{[n]}_{\mathrm{sm}}$ is a smooth subvariety of $U$,
\[\mathrm{Pic}(X^{[n+1,n]}_{\ast})\cong \mathrm{Pic}(U)\oplus \mathbf{Z}(E_U)\]
where $E_U$ is the exceptional locus of $\phi \vert_U$.
Denote E to be the exceptional divisor of the blow up morphism $\phi$. 
Then, $E|_U=E_U$.

Finally, we recall that if $Y$ and $Z$ are smooth projective varieties, then 
\[\mathrm{Pic}(Y\times Z)\cong \mathrm{Pic}(Y) \oplus \mathrm{Pic}(Z) \oplus \mathrm{Hom}(A(Y),P(Z)),\]
where $A(Y)$ is the Albanese variety of $Y$ and $P(Z)$ is the Picard variety of $Z$ (\cite{Mumford}). 
We have $A(X)=0$ since we assumed that $H^1(X,\mathcal{O}_X)=0$.
Hence,
\[\mathrm{Pic}(X^{[n]}\times X)\cong \mathrm{Pic}(X^{[n]})\oplus \mathrm{Pic}(X).\]

Combining this with Fogarty's previously referenced result (\ref{eqn:fogartypic}), we obtain that
\begin{eqnarray*}
    \mathrm{Pic}(X^{[n+1,n]}) &\cong& \mathrm{Pic}(X^{[n+1,n]}_{\ast}) \cong \mathrm{Pic}(U)\oplus \mathbf{Z}(E_U)\\
                            &\cong& \mathrm{Pic}(X^{[n]}\times X)\oplus \mathbf{Z}(E_U) \cong \mathrm{Pic}(X^{[n]})\oplus \mathrm{Pic}(X)\oplus \mathbf{Z}(E_U)\\
                            &\cong& \mathrm{Pic}(X)^{\oplus 2} \oplus \mathbf{Z}(\frac{B[n]}{2}) \oplus \mathbf{Z}(E_U)
\end{eqnarray*}
Therefore,
\[ \mathrm{Pic}(X^{[n+1,n]})\cong \mathrm{Pic}(X)^{\oplus 2} \oplus \mathbf{Z}(\mathrm{pr}_b^\ast(\frac{B[n]}{2})) \oplus \mathbf{Z}(E)\]

Note that by Proposition \ref{prop:blow up of universal family}, $E$ corresponds to the locus of pairs $(\eta,\xi) \in X^{[n+1,n]}$ where $\eta$ and $\xi$ have same support. Hence 
$E =\mathrm{pr}_a^\ast \left(\frac{B[n+1]}{2}\right)-\mathrm{pr}_b^\ast\left(\frac{B[n]}{2}\right)$ (\cite[Section 1]{Lehn}), so the Picard group is generated by the pull backs the Picard groups analogously to the universal family case.
\end{proof}

\begin{remark}
One could in principle compute the Picard group when the irregularity is nonzero by including the input of divisorial correspondences (as in \cite[Proposition 7.1]{Fogarty2}).
Therefore, for simplicity, we only state the proposition under the assumption of zero irregularity.
\end{remark}

\section{Divisors and curves}
\label{sec: div and curves}

In this section, we introduce divisor and curve classes on $X^{[n+1,n]}$ that play a crucial role in its birational geometry.
The classes we define are generalizations of classes on $X^{[n]}$ so we will first recall those constructions.


\subsection{Divisors and Curves on the Hilbert Scheme}
Good references for these classes and their intersection product on rational surfaces are \cite{ABCH} and \cite{BC}.
Assuming the Picard number of $X$ is $k$, let $H_1$, $\cdots$, $H_k$ be a set of effective divisors on $X$ whose numerical classes span the N\'eron-Severi space $N^1_{\mathbb{R}}(X)$ and let $h_i$ be a general irreducible representative of the class $H_i$ which we now fix.


\subsubsection{Divisors on the Hilbert Scheme}
Recall that Fogarty \cite[Lemma 4.4]{Fogarty2} showed 
\[\mathrm{Pic}(X^{[n]})\cong \mathrm{Pic}(X)\oplus \mathbf{Z} \]
A classical $\mathbb{R}$-basis for the N\'eron-Severi space of $X^{[n]}$ is given by looking at the schemes whose support intersects those given representatives,
\[H_i[n] :=\{\xi\in X^{[n]} : \text{supp}(\xi) \cap h_i \neq \emptyset \},\]
along with the divisor of nonreduced schemes, which is defined set theoretically
\[B[n] := \{\xi \in X^{[n]} : \xi \text{ is nonreduced}\}.\]
$B[n]$ is the exceptional locus of the Hilbert-Chow morphism. 
Note, $H_i[n]$ is the same class as $H_i[n]$ defined in Section \ref{sec: prelim}.


\subsubsection{Curves on the Hilbert Scheme}
In contrast to the divisors, we will only define the types of curve class relevant for our proof, not an entire basis.
We will represent these curve classes with diagrams where hollow points represent a point of $\xi$ moving on the curve it lies on, solid points are fixed points in $\xi$, and dotted arrows attached to a point represent varying the tangent direction (i.e. double scheme structure) of a length two scheme on that point (c.f. \cite{Stathis}).
A basis for the space of curves on $X^{[n]}$ has a curve corresponding to each $H_i$ and one additional element corresponding to $B$.
For the first type of curve, fix $n-1$ general points and then vary the $n$-th point of the subscheme along the curve $h_i$; denote that curve by $C_i[n]$.
\begin{center}

\begin{tikzpicture}
\draw plot [smooth, tension=1] coordinates { (-.2,-1) (0,-.6) (0,.6) (.2,1)};
\node[draw=black,circle,scale=.6] at (0,0) {};
\node[fill,circle,scale=.5] at (1.2,.8) {};
\node[fill,circle,scale=.5] at (1.2,-.8) {};
\node[fill,circle,scale=.2] at (1.2,.3) {};
\node[fill,circle,scale=.2] at (1.2,0) {};
\node[fill,circle,scale=.2] at (1.2,-.3) {};
\node[right] at (1.7,0) {$n-1$ general fixed points};
\node[right] at (0,-1) {$h_i$};
\node[left] at (-1,0) {$C_i[n]   :$};
\end{tikzpicture}

\end{center}

For the last element of the basis, there are two natural choices which each have their own advantages.
The first choice is a curve where we fix $n-1$ general points and then vary a double scheme structure supported on a given one of the fixed points; denote this curve by $A[n]$.
\begin{center}
\begin{tikzpicture}
\node[fill,circle,scale=.5] at (0,0) {};
\draw[-{>[scale=1]},dashed,thick] (0,0) -- (1/2,1/2);
\node[fill,circle,scale=.5] at (1.4,.8) {};
\node[fill,circle,scale=.5] at (1.4,-.8) {};
\node[fill,circle,scale=.2] at (1.4,.3) {};
\node[fill,circle,scale=.2] at (1.4,0) {};
\node[fill,circle,scale=.2] at (1.4,-.3) {};
\node[right] at (1.9,0) {$n-2$ general fixed points};
\node[left] at (-1,0) {$A[n]:$};
\end{tikzpicture}
\end{center}
This curve class has the advantage of intersecting all of the $H_i[n]$ in zero but the disadvantage of being hard to use in some computations.

The alternative is to fix $n-2$ general points, fix a general point of $h_1$, and then vary the $n$-th point along $h_1$; denote this curve by $C_0[n]$.
\begin{center}
\begin{tikzpicture}[scale=1]
\draw plot [smooth, tension=1] coordinates { (-.2,-1) (0,-.6) (0,.6) (.2,1)};
\node[draw=black,circle,scale=.5] at (0,-.25) {};
\node[fill,circle,scale=.5] at (0,.25) {};
\node[fill,circle,scale=.5] at (1.2,.8) {};
\node[fill,circle,scale=.5] at (1.2,-.8) {};
\node[fill,circle,scale=.2] at (1.2,.3) {};
\node[fill,circle,scale=.2] at (1.2,0) {};
\node[fill,circle,scale=.2] at (1.2,-.3) {};
\node[right] at (1.7,0) {$n-2$ general fixed points};
\node[right] at (0,-1) {$h_1$};
\node[left] at (-1,0) {$C_0[n]:$};
\end{tikzpicture}
\end{center}
This curve has the advantage of being easy to use during computations.  

When the $n$ is clear, we will drop the $[n]$ from the notation for both curves and divisors.

\subsubsection{Other curves and divisors}\label{section:other curves}
We can also define a broader range of curve and divisor classes which are useful when computing nef and effective cones.

First, recall that if some multiple of the divisor can be written in the basis as $\sum_{i=1}^k m_i\cdot H_i-\frac{1}{2}B$, we denote this by $D_{\mathbf{m}}$.

Next, let $C_{\gamma,r}[n]$ be the curve class of $ \xi \subset X^{[n]}$ given by fixing $r-1$ general points of a curve $c$ of class $\gamma$ as part of $\xi$, $n-r$ general points not contained in $c$ as part of $\xi$, and varying the last point of $\xi$ on $c$.

\begin{center}
\begin{tikzpicture}[scale=1]
\draw plot [smooth, tension=.5] coordinates { (-.2,-1.5) (0,-1.1) (0,.6) (.2,1)};
\node[draw=black,circle,scale=.5] at (0,-1.1) {};
\node[fill,circle,scale=.5] at (0,.6) {};
\node[fill,circle,scale=.5] at (0,-.6) {};
\node[fill,circle,scale=.2] at (0,.3) {};
\node[fill,circle,scale=.2] at (0,0) {};
\node[fill,circle,scale=.2] at (0,-.3) {};
\node[fill,circle,scale=.5] at (1.2,.8) {};
\node[fill,circle,scale=.5] at (1.2,-.8) {};
\node[fill,circle,scale=.2] at (1.2,.3) {};
\node[fill,circle,scale=.2] at (1.2,0) {};
\node[fill,circle,scale=.2] at (1.2,-.3) {};
\node[right] at (1.2,0) {$n-r$ points};
\node[right] at (-2.5,0) {$r-1$ points};
\node[right] at (0,-1.25) {$c$};
\node[left] at (-3,0) {$C_{\gamma,r}[n]:$};
\end{tikzpicture}
\end{center}

\begin{example}
On $\mathbf{P}^2$, $C_{H,n-1}[n]$ is the curve given by varying one point on a line which contains $n-1$ fixed points.
\end{example}


\subsubsection{The Intersection Product}
The intersection product between these divisors and curves is well known, but we will recall it for completeness (c.f. \cite{ABCH}). 

For $i\neq0$, $C_i[n] \cdot H_j[n]$ is number of times the varying point intersects $h_j$ as the $n-1$ general points won't lie on $h_j$. 
Thus, $C_i[n] \cdot H_j[n] = H_i\cdot H_j$.
Similarly, $C_0[n] \cdot H_j[n] = H_1 \cdot H_j$.
Since $A[n]$ has fixed support, $A[n] \cdot H_j[n] = 0$. 

Again for $i\neq0$, each $C_i[n]$ contains no nonreduced schemes so $C_i[n] \cdot B[n] = 0$. However $C_0[n]$ has exactly one nonreduced scheme so $C_0[n] \cdot B[n] = 2$.
Lastly, in order to find $A[n] \cdot B[n]$, one can use test divisors to see that $A[n] = C_1[n] -C_0[n]$ (i.e. $A[n] \cdot H_i[n] =0$ implies $A[n] = k\left(C_1[n] -C_0[n]\right)$ and $A[n] \cdot D$ 
for any divisor $D$ involving $B$ determines $k=1$).
Using this class, $A[n] \cdot B[n] =-2$ \cite{ES}.
Intuitively, this intersection should be negative because every representative of $A[n]$ lives inside $B[n]$.

\begin{remark}
Note that if $H_i \cdot H_j =\delta_{ij}$, 
then $C_i[n]$ ($1 \leq i \leq k$) and $A[n]$ give a basis for the curves that has a diagonal intersection matrix with the given basis for the divisors.
\end{remark}

\begin{example}
If $X = \mathbf{P}^2$, there are only two divisors, $H[n]$ and $B[n]$, with three relevant curve classes, $C_0[n]$, $C_1[n]$, and $A[n]$. Given these divisors and curves, we have the following intersection product.
\begin{center}
  \begin{tabular}{|c|c|c|}
     \hline
                 & $H[n]$   & $B[n]$ \\ \hline
    $C_{0}[n]$   & $1$        & $2$ \\ \hline   
    $C_{1}[n]$   & $1$        & $0$ \\ \hline   
    $A[n]$       & $0$        & $-2$ \\ \hline   
  \end{tabular}
\end{center}
\end{example}


\subsection{Divisors and Curves on the Nested Hilbert Scheme}

The bases for divisors and curves of the Hilbert scheme generalize to bases of divisors and curves for the smooth nested Hilbert schemes.


\subsubsection{A Basis for the Divisors}

Intuitively, each divisor of $X$ now corresponds to two divisors.
One divisor imposes the analogous condition on the smaller subscheme while the other imposes it on the residual points.

Given the divisor $H_i$ on $X$ we have 
\[H^{\mathrm{diff}}_i=\{ (z,z')\in X^{[n+1,n]} 
: \mathrm{res}(z,z') \cap h_i \neq \emptyset\}  \text{, and}\]
\[ H^b_i=\{ (z,z')\in X^{[n+1,n]} 
: \text{supp}(z') \cap h_i \neq \emptyset\}   \]
We also define $H^a_i:=\mathrm{pr}_a^*(H) = H^{\mathrm{diff}}_i+H^b_i$. 

Interestingly, these geometrically defined divisors can also be realized as pull backs of the divisors along natural projections. 
\[ H^b_i=\mathrm{pr}_b^\ast \left(H_i[n]\right) \text{ and } H^{\mathrm{diff}}_i=\mathrm{res}^\ast \left(H_i\right).\]
Alternatively, we could have used the two pull backs of $H_i$ from the two Hilbert schemes as the two corresponding divisors, but the choice we made is more natural as these divisors are irreducible.

The only divisor left to generalize is $B$,
which also generalizes to two Cartier divisors on $X^{[n+1,n]}$.
The first divisor is where the residue of $(z,z')$ 
is part of the support of $z'$,
\[B^{\mathrm{diff}}[n+1,n]=\{ (z,z') \in X^{[n+1,n]} : \text{supp}(z') \cap \mathrm{res}(z,z') \neq \emptyset \}.\]
The second divisor is where the subscheme $z'$ is nonreduced,
\[B^b[n+1,n]=\{ (z,z') \in X^{[n+1,n]} : z' \text{ is nonreduced}\}.\]
Again, we can realize these as pullbacks, $B^b[n+1,n] = \mathrm{pr}_b^*\left( B[n]\right)$ and $B^{\mathrm{diff}}[n+1,n] =  \mathrm{pr}_a^*\left( B[n+1]\right)-\mathrm{pr}_b^*\left( B[n]\right)$. We also denote $B^a[n+1,n]=\mathrm{pr}_a^*\left( B[n+1]\right)$.

\begin{example}\label{ex:canonical divisor}
To be familiar with the notation, let's use Proposition \ref{prop:blow up of universal family} to give the class of the canonical divisor $K_{X^{[n+1,n]}}$ of $X^{[n+1,n]}$ when $X$ is the projective plane. Since $X^{[n+1,n]}$ is the blow up of $X^{[n]}\times X$ along $Z^{[n]}$, we have  
\begin{eqnarray*}
K_{\mathbf{P}^{2[n+1,n]}}&=&\mathrm{pr}_b^\ast(K_{\mathbf{P}^{2[n]}})+\mathrm{res}^\ast(K_{\mathbf{P}^2})+E\\
&=&-3H^b[n+1,n]-3H^{\mathrm{diff}}[n+1,n]+\frac{1}{2}B^{\mathrm{diff}}[n+1,n]
\end{eqnarray*}
\end{example}


\subsubsection{A Basis for the Curves}\label{sec:curvebasis}
The way that each curve class on $X^{[n]}$ generalizes to two classes on $X^{[n+1,n]}$ is by changing which subscheme contains the varying point.
We will use circles for points in $z'$ and squares for residual points. The shape is unfilled if the point is moving and
filled if the point is fixed. 

For $i\neq0$, let $C^a_i$ be the curve class of $(z,z')\subset X^{[n+1,n]}$ given by fixing a general $z'$ and varying the residual point of $z$ on $h_i$ for $1\leq i \leq k$.
\begin{center}
\begin{tikzpicture}[scale=1]
\draw plot [smooth, tension=1] coordinates { (-6.2,-1) (-6,-.6) (-6,.6) (-5.8,1)};
\node[draw=black,rectangle,scale=.8] at (-6,0) {};
\node[fill,circle,scale=.5] at (-5,.8) {};
\node[fill,circle,scale=.5] at (-5,-.8) {};
\node[fill,circle,scale=.2] at (-5,.3) {};
\node[fill,circle,scale=.2] at (-5,0) {};
\node[fill,circle,scale=.2] at (-5,-.3) {};
\node[right] at (-6,-1) {$h_i$};
\node[left] at (-7,0) {$C^a_i[n+1,n]:$};
\end{tikzpicture}
\end{center}

Let $C^b_i$ be the curve class given by fixing general points for all but one point of $z$, including the residual point, and varying the last point of $z' \subset z$ on $h_i$ for $1\leq i \leq k$. 

\begin{center}
\begin{tikzpicture}[scale=1]
\draw plot [smooth, tension=1] coordinates { (-6.2,-1) (-6,-.6) (-6,.6) (-5.8,1)};
\node[draw=black,circle,scale=.6] at (-6,0) {};
\node[fill,circle,scale=.5] at (-5,.8) {};
\node[fill,circle,scale=.5] at (-5,-.6) {};
\node[fill,circle,scale=.2] at (-5,.45) {};
\node[fill,circle,scale=.2] at (-5,.15) {};
\node[fill,circle,scale=.2] at (-5,-.15) {};
\node[fill,rectangle,scale=.5] at (-5,-1) {};
\node[right] at (-6,-1) {$h_i$};
\node[left] at (-7,0) {$C^b_i[n+1,n]:$};
\end{tikzpicture}
\end{center}

Next, let $C^a_0$ be the curve class given by fixing one point of $z'$ as a general point of $h_1$, fixing the rest of $z'$ as general points, and then varying the residual point of $z$ along $h_1$.

\begin{center}
\begin{tikzpicture}[scale=1]
\draw plot [smooth, tension=1] coordinates { (-6.2,-1) (-6,-.6) (-6,.6) (-5.8,1)};
\node[draw=black,rectangle,scale=.7] at (-6,-.25) {};
\node[fill,circle,scale=.5] at (-6,.25) {};
\node[fill,circle,scale=.5] at (-5,.8) {};
\node[fill,circle,scale=.5] at (-5,-.8) {};
\node[fill,circle,scale=.2] at (-5,.3) {};
\node[fill,circle,scale=.2] at (-5,0) {};
\node[fill,circle,scale=.2] at (-5,-.3) {};
\node[right] at (-6,-1) {$h_i$};
\node[left] at (-7,0) {$C^a_0[n+1,n]:$};
\end{tikzpicture}
\end{center}

Let $C^b_0$ be the curve class given by fixing $n-2$ general points of $X$ as a subset of $z'$, fixing one general point of $h_1$ as a subset of $z'$, fixing a general point of $X$ as the residual point, and then varying $n$-th point of $z'$ along $h_1$. 
\begin{center}
\begin{tikzpicture}[scale=1]
\draw plot [smooth, tension=1] coordinates { (-.2,-1) (0,-.6) (0,.6) (.2,1)};
\node[draw=black,circle,scale=.5] at (0,-.25) {};
\node[fill,circle,scale=.5] at (0,.25) {};
\node[fill,circle,scale=.5] at (1.2,.8) {};
\node[fill,circle,scale=.5] at (1.2,-.4) {};
\node[fill,circle,scale=.2] at (1.2,.5) {};
\node[fill,circle,scale=.2] at (1.2,0.2) {};
\node[fill,circle,scale=.2] at (1.2,-.1) {};
\node[fill,rectangle,scale=.5] at (1.2,-.9) {};
\node[right] at (0,-1) {$h_1$};
\node[left] at (-1,0) {$C^b_0[n+1,n]:$};
\end{tikzpicture}
\end{center}


Finally, let $ A^a$ be defined as the curve class given by fixing a general $z'$ and then varying the double point structure of the residual point of $z$ over a fixed point of $z'$.
\begin{center}
\begin{tikzpicture}
\node[fill,circle,scale=.5] at (0,0) {};
\draw[dashed,thick] (0,0) -- (1/2,1/2);
\node[draw=black,rectangle,scale=.7] at (1/2,1/2) {};
\node[fill,circle,scale=.5] at (1.4,.8) {};
\node[fill,circle,scale=.5] at (1.4,-.8) {};
\node[fill,circle,scale=.3] at (1.4,.3) {};
\node[fill,circle,scale=.3] at (1.4,0) {};
\node[fill,circle,scale=.3] at (1.4,-.3) {};
\node[left] at (-1,0) {$ A^a[n+1,n]:$};
\end{tikzpicture}
\end{center}

Similarly, let $ A^b$ be defined as the curve class given by fixing a scheme $z$ supported on $n-1$ general points with length three at the support of the residual point and varying $z'$ by fixing the reduced points and varying the double scheme structure at the nonreduced point.

\begin{center}
\begin{tikzpicture}
\node[fill,circle,scale=.5] at (0,0) {};
\draw[-{>[scale=1]},dashed,thick] (0,0) -- (1/2,1/2);
\draw[-,thick] (0,0) -- (0,.71);
\node[fill,circle,scale=.5] at (1.4,.8) {};
\node[fill,circle,scale=.5] at (1.4,-.5) {};
\node[fill,circle,scale=.2] at (1.4,.4) {};
\node[fill,circle,scale=.2] at (1.4,0.1) {};
\node[fill,circle,scale=.2] at (1.4,-.2) {};
\node[fill,rectangle,scale=.56] at (0,.7) {};
\node[left] at (-1,0) {$ A^b[n+1,n]:$};
\end{tikzpicture}
\end{center}

\begin{remark}
Note that for $0\leq i \leq k$, we have
\[ \mathrm{pr}_a(C^a_i[n+1,n])=C_i[n+1] \hspace{.2in}\text{  and  }\hspace{.2in} \mathrm{pr}_b(C^b_i[n+1,n])=C_i[n]. \]

We have similar descriptions of the images of the $A$ curves.
\[\mathrm{pr}_a(A^a[n+1,n]) = A[n+1], \hspace{.5in} \mathrm{pr}_b(A^a[n+1,n]) =\text{pt}, \]
\[\mathrm{pr}_a(A^b[n+1,n]) = \text{pt}, \hspace{.3in}\text{ and } \hspace{.3in} \mathrm{pr}_b(A^b[n+1,n]) =A[n] \]
These equalities will help to compute the intersection products.
\end{remark}

\subsection{Other curves and divisors}
For our computations, we will need a few other curve and divisor classes. 
These are two of the three natural generalizations of the other curves in the Hilbert scheme case.

\subsubsection{Divisors}
Given a divisor on $X^{[n]}$, we can write some multiple of it as 
\[ D_{\mathbf{m}}[n] = \sum_{i=1}^k m_i \cdot H_i[n] -\frac{1}{2}B[n]. \]
or as $\sum_{i=1}^k m_i \cdot H_i[n]$ where $\mathbf{m}=\{m_1,\ldots, m_k \}$ .
We will denote the pullback of $D_{\mathbf{m}}[n+1]$ to the nested Hilbert scheme $X^{[n+1,n]}$ along $\mathrm{pr}_a$ by $D^a_{\mathbf{m}}$: 
\[D^a_\mathbf{m} := \mathrm{pr}_a^*\left(D_{\mathbf{m}}\right)= m\cdot \left( B^a \right) + \sum_{i=1}^k m_i \cdot H^a_i\]

Similarly, we will denote its pullback to the nested Hilbert scheme along $\mathrm{pr}_b$ by $D^b_{\mathbf{m}}$:    
\[D^b_{\mathbf{m}} := \mathrm{pr}_b^*\left(D_{\mathbf{m}}\right)= m\cdot B^b + \sum_{i=1}^k m_i\cdot H^b_i.\]

\begin{example}\label{ex:D_n}
On $\mathbf{P}^2$, $D_k[n] = k H[n] -\frac{1}{2}B[n]$ so 
\[D^a_{k} = k H^a -\frac{1}{2}B^a.\] 
\end{example}

\subsubsection{Curves}\label{sec:othercurve}
First let 
\[ \gamma=\sum_{i=1}^k m_i \cdot h_i \]
be a curve class on $X$ and let $c$ be a general irreducible curve of class $\gamma$.

Let $C^a_{\gamma,r}$ be the curve class of $(z,z')\subset X^{[n+1,n]}$ given by fixing $r-1$ general points of $c$ as part of $z'$, $n+1-r$ general points not contained in $c$ as part of $z'$, with $x=\mathrm{res}(z,z')$ varying on $c$ for $1 \leq r \leq n+1.$
\begin{center}
\begin{tikzpicture}[scale=1]
\draw plot [smooth, tension=.5] coordinates { (-.2,-1.5) (0,-1.1) (0,.6) (.2,1)};
\node[draw=black,rectangle,scale=.5] at (0,-1.1) {};
\node[fill,circle,scale=.5] at (0,.6) {};
\node[fill,circle,scale=.5] at (0,-.6) {};
\node[fill,circle,scale=.2] at (0,.25) {};
\node[fill,circle,scale=.2] at (0,0) {};
\node[fill,circle,scale=.2] at (0,-.25) {};

\node[fill,circle,scale=.5] at (1.2,.8) {};
\node[fill,circle,scale=.5] at (1.2,-.8) {};
\node[fill,circle,scale=.2] at (1.2,.3) {};
\node[fill,circle,scale=.2] at (1.2,0) {};
\node[fill,circle,scale=.2] at (1.2,-.3) {};
\node[right] at (1.2,0) {$n-r+1$ points};
\node[right] at (-2.5,0) {$r-1$ points};
\node[right] at (0,-1.25) {$c$};
\node[left] at (-3,0) {$C^a_{\gamma,r}[n+1,n]:$};
\end{tikzpicture}
\end{center}


Let $C^b_{\gamma,r}$ be the curve class given by fixing $r-1$ general points of $c$ as part of $z'$, $n-r$ general points not contained in $c$ as part of $z'$, fixing $x=\mathrm{res}(z,z') \in c$, and varying the last point of $z'$ on $c$.
\begin{center}
\begin{tikzpicture}[scale=1]
\draw plot [smooth, tension=.5] coordinates { (-.2,-1.5) (0,-1.1) (0,.6) (.2,1)};
\node[draw=black,circle,scale=.5] at (0,-1.1) {};
\node[fill,circle,scale=.5] at (0,.6) {};
\node[fill,circle,scale=.5] at (0,-.4) {};
\node[fill,circle,scale=.2] at (0,.3) {};
\node[fill,circle,scale=.2] at (0,.1) {};
\node[fill,circle,scale=.2] at (0,-.1) {};
\node[fill,rectangle,scale=.5] at (0,-.75) {};

\node[fill,circle,scale=.5] at (1.2,.8) {};
\node[fill,circle,scale=.5] at (1.2,-.8) {};
\node[fill,circle,scale=.2] at (1.2,.3) {};
\node[fill,circle,scale=.2] at (1.2,0) {};
\node[fill,circle,scale=.2] at (1.2,-.3) {};
\node[right] at (1.2,0) {$n-r$ points};
\node[right] at (-2.5,0) {$r-1$ points};
\node[right] at (0,-1.25) {$c$};
\node[left] at (-3,0) {$C^b_{\gamma,r}[n+1,n]:$};
\end{tikzpicture}
\end{center}

The classes of these curves can be expressed as
\[C^a_{\gamma,r} = \sum_{i=1}^k m_i\cdot C^a_i-\left(r-1\right) A^a \text{ and } C^b_{\gamma,r} = \sum_{i=1}^k m_i\cdot C^b_i-A^a-\left(r-1\right) A^b.\]

\begin{remark}
Note, as before we have
\[ \mathrm{pr}_a(C^a_{\gamma,r})=C_{\gamma,r}[n+1] \text{, }\hspace{.8in} \mathrm{pr}_b(C^a_{\gamma,r})=\text{pt},\]
\[ \mathrm{pr}_a(C^b_{\gamma,r})=C_{\gamma,r+1}[n+1], \hspace{.3in} \text{  and  }\hspace{.3in} \mathrm{pr}_b(C^b_{\gamma,r})=C_{\gamma,r}[n].\]
\end{remark}




\subsubsection{The Intersection Product}
We now want to compute the intersections between our two bases.

We only consider the intersections of $C^a_j$ and $C^b_j$ because the intersections of $ A^a$ and $A^b$ follow from using test divisors to compute their classes as
\[ A^a = C^a_1-C^a_0 \text{ and } A^b = C^b_1-C^b_0-(C^a_1-C^a_0).\]

\textit{(1) Intersection with $H^{\mathrm{diff}}_i$.} 
As we are varying a residual point of $z$ along $h_j$ on $C^a_j$, by the projection formula, we have $H^\mathrm{diff}_i \cdot C^a_j =H_i\cdot H_j$.
Similarly, the residual points of $z$ are fixed on $C^b_j$, so $H^\mathrm{diff}_i \cdot C^b_j= 0$. 

\textit{(2) For intersection with $H^b_i$.} 
Because $z'$ is fixed on $C^a_j$,  we have $H^b_i\cdot C^a_j=0$. 
Similarly, since $\mathrm{pr}_b(C^b_j)=C_j[n]$ and $\mathrm{pr}_b(H^b_i)=H_i$, we have $H^b_i \cdot C^b_j=H_i\cdot H_j$.
  
\textit{(3) Intersection with $B^{\mathrm{diff}},B^b$.} 
When $1\leq j \leq k$, $C^a_j$ and $C^b_j$ have no nonreduced schemes on them so $C^a_j \cdot B^{\mathrm{diff}} =C^a_j \cdot B^b =C^b_j \cdot B^{\mathrm{diff}} =C^b_j \cdot B^b =0$.
When $j=0$, however, $C^a_0$ has a single nonreduced scheme where the residual point is supported on a point of $z'$ so $C^a_0 \cdot B^{\mathrm{diff}} =2, C^a_0 \cdot B^b =0$.
Similarly, $C^b_0$ has a single nonreduced scheme where the nonreduced scheme is a subscheme of $z'$; hence, $C^b_0 \cdot B^{\mathrm{diff}} =0, C^b_0 \cdot B^b =2$.

\begin{example}
For the case of $\mathbf{P}^2$, we have four divisors and the following intersection product where we have writen $H_1$ as $H$ as there is only one ample divisor. 
\begin{center}
  \begin{tabular}{|c|c|c|c|c|}
     \hline
            & $H^\mathrm{diff}$ & $B^{\mathrm{diff}}$ & $H^b$ & $B^b$ \\ \hline
    $C^a_0$ & 1   & 2     & 0     & 0   \\ \hline      
    $C^a_1$ & 1   & 0     & 0     & 0  \\ \hline
    $A^a$     & 0   & -2    & 0     & 0  \\ \hline
    $C^b_0$ & 0   & 0     & 1     & 2   \\ \hline
    $C^b_1$ & 0   & 0     & 1     & 0   \\ \hline   
    $A^b$     & 0   & 2     & 0     & -2 \\ \hline
  \end{tabular}
\end{center}
\end{example}


\section{The Nef Cone}
\label{sec: nef}
We now turn to computing the nef cone of each smooth nested Hilbert scheme on the projective plane, a Hirzebruch surface, or a K3 surface of Picard rank one.


\subsection{Cones and the cone dualities}
First,  we recall some basic facts from \cite[Section 1.4]{Lazarsfeld1}. 
For a projective algebraic variety $X$, the nef cone of divisors $\text{Nef}(X)\subset N^1(X)_\mathbf{R}$ can be defined as follows:
\[\text{Nef}(X) = \{C\cdot D \geq 0 \text{ for all irreducible curves } C \subset X\}.\]

If $C\cdot D =0$, we say that $C$ is dual to $D$. Our strategy for computing nef cones is to find sufficiently many nef divisors and effective curves dual to them with an intersection matrix that is diagonal. 
It follows that these divisors span the entire nef cone.

In this section we will proceed in three steps.
First, we define the needed notation.
Then, we give the nef cones of 
$X^{[n]}$ for these surfaces given by \cite[Theorem 3.14]{LQZ}, \cite[Theorem 2.4]{BC} and \cite[Proposition 10.3]{BM2}, respectively.
Finally, we use these to construct the nef cone of $X^{[n+1,n]}$ for these surfaces.

Let us fix some notation for the surfaces we will use.

\begin{enumerate}
   \item Denote the projective plane by $\mathbf{P}^2$. Denote the class of a line by $H$. Note that $H^2 =1$.
   \item Denote the $i$-th Hirzebruch surface by $\mathbf{F}_i$. Denote the classes of a hyperplane section and a fiber of the ruling, respectively by $H$ and $F$. Note that $H^2 = i$, $F^2=0$, and $H\cdot F = 1$. Also in the case of $\mathbb{F}_0$, $F$ is a line of the other ruling and $H$ is a hyperplane section of that ruling. In order to be consistent with defining our curve classes, we define $H=H_1$ and $F =H_2$. 
   \item Denote a general genus $g$ K3 surface by $S$. Denote the class of a hyperplane section by $H$. Notice that $H^2= 2g-2$.
\end{enumerate}

\subsection{Nef Cone of the Hilbert Scheme}
Now let us recall the nef cones of $X^{[n]}$ on each surface.
Although some of these are well known to experts, we have included them for completeness.
We will also give a brief outline of the argument in each case.

\subsubsection{The projective plane}
\cite{LQZ} showed that the nef cone of $\mathbf{P}^{2[n]}$ is spanned by 
\[ H[n] \text{ and } D_{n-1}[n] =(n-1)H[n]-\frac{1}{2}B[n]. \]
A representative of $D_{n-1}[n]$ can be described as the schemes which lie on a curve of degree $n-1$ with $\binom{n}{2}$ fixed points. Since no collection of points of $\mathbf{P}^2$ is on every line, $H[n]$ is basepoint-free; hence, it is nef.
As any collection of $n$ points on $\mathbf{P}^2$ lies on some curve of degree $n-1$ and off of one, $D_{n-1}[n]$ is base point free; hence, it is nef.

Using curves defined in (\ref{section:other curves}), we have the following intersection table which completes the calculation of nef cone.

\begin{center}
  \begin{tabular} {| c | c | c |}
    \hline
                  & $H[n]$ & $D_{n-1}[n]$  \\ \hline 
    $C_{H,n}[n]$ & 1 & 0  \\ \hline
    $A[n]$         & 0 & 1 \\ 
    \hline
  \end{tabular}
\end{center}

\subsubsection{Hirzebruch surfaces}

\cite{BC} also showed that the nef cone of $\mathbf{F}_i^{[n]}$ for $i>0$ is spanned by 
\[ H[n], F[n] \text{ and } D_{n-1,n-1}[n] = (n-1)H[n] + (n-1)F[n] -\frac{B}{2}[n]. \] 
A representative of $D_{n-1,n-1}[n]$ can again be described as the schemes which lie on a curve of type $(n-1)H+(n-1)F$ with the correct number of fixed points. 

By similar reasoning, these three divisors are basepoint free, hence nef. 

Again we have a three dimensional convex cone so we need curves dual to each pair of sides. Recalling $H$ and $F$ were a fiber and a section of the ruling, respectively, we see that $C_{H+iF,n}[n]$, $C_{F,n}[n]$, and $A[n]$ are the three needed dual curves.

\subsubsection{Hilbert schemes of points on K3 surfaces for high numbers of points.}
The nef cones of these Hilbert schemes are much more subtle than those on rational surfaces.
$k$-very ample line bundles are no longer sufficient to span the nef cone.
\cite{BM2} computed this nef cone using Bridgeland stability .
However, $n$ needs to be sufficiently large for the cleanest solution, and we now add that as a standing assumption.
We denote the extremal nef class they construct as $D_n$. Then the nef cone of $S^{[n]}$ is spanned by
\[ H[n] \text{ and } D_n=\frac{n-1+g}{2g-2}H[n]-\frac{1}{2}B[n].\]

Recall, that $H$ is the generator of the Picard group on $S$.
Then, to see that $H[n]$ and $D_n$ span the nef cone, it suffices to find a dual curve to $D_n$.
That dual curve is given by the fibers of a $g^1_n$, i.e. the fibers of an $n$ to $1$ map from a curve with class $H$ to $\mathbb{P}^1$.
As the fibers of the $g^1_n$ exactly cover a fixed curve with class $H$, its intersection with the divisor $H[n]$ is $2g-2$.
By the adjunction formula, its intersection with $B[n]$ is $g-1+n$.
This shows that the non-trivial edge of the nef cone is spanned by the class $\frac{n-1+g}{2g-2}H[n]-\frac{1}{2}B[n]$.

Thus, we have the following intersection table which completes the calculation.
\begin{center}
  \begin{tabular} {| c | c | c |}
    \hline
                  & $H[n]$ & $D_{n}$  \\ \hline 
    $g^1_n$       & $2g-2$   & 0        \\ \hline
    $A[n]$        & 0      & 1        \\ \hline
  \end{tabular}
\end{center}

\begin{remark}

It is useful for us to point out an alternative construction of the dual curve.
By \cite[Appendix]{MM2}, there exist rational $g$-nodal curves with class $H$.
We then construct a curve $C_{\text{nodal}}$ on $S^{[n]}$, $n>g$, by fixing $g$ points of the scheme on the nodes, fixing $n-1-g$ points of the scheme as general points of the curve, and varying the $n$-th point along the curve.

As the varying point hits another reduced point of the scheme exactly $(n-1-g)+2g=n-1+g$ times,  $C_{\text{nodal}} \cdot B[n] =2(n-1+g)$. As the point varies on a curve of class $H$, $C_{\text{nodal}} \cdot H[n] = 2g-2$.
Together, these intersections show that $C_{\text{nodal}}$ is also a dual curve to the non-trivial edge of the nef cone.
This construction generalizes more easily to nested Hilbert schemes.
\end{remark}


\subsection{Nef Cone of the Nested Hilbert Scheme}
We now turn to computing the nef cone of $X^{[n+1,n]}$ where $X$ is $\mathbf{P}^2$, $\mathbf{F}_i$, or $S$.


By Proposition \ref{prop:Picard group}, the Picard group is
\[ \text{Pic}(X^{[n+1,n]}) \cong \mathrm{pr}_a^\ast \left(\text{Pic}(X^{[n+1]}) \right)\oplus \mathrm{pr}_b^\ast \left(\text{Pic}(X^{[n]})\right)\]
It will turn out that similar statement almost holds for the nef cone, as the spanning nef divisors will be (differences of) pull backs of extremal nef divisors on the Hilbert schemes.

Take $\mathbf{P}^2$ as an example: intuitively, each dual curve should project either to a dual curve or to a point so we consider $A^a$, $A^b$, $C^a_{H,n+1}$, and $C^b_{H,n}$.
Pairing these against the four pull backs of the extremal rays of the nef cones, we get the following.

\begin{center}
  \begin{tabular} {|c | c | c | c | c|}
    \hline
                  & $H^b$ & $D^b_{n-1}$ & $H^{a}$ & $D^a_{n}$  \\ \hline 
    $C^b_{H,n}$   & 1 & 0 & 1 & 0 \\ \hline
    $A^b$         & 0 & 1 & 0 & 0 \\ \hline
    $C^a_{H,n+1}$ & 0 & 0 & 1 & 0 \\ \hline
    $A^a$         & 0 & 0 & 0 & 1 \\ 
    \hline
  \end{tabular}
\end{center}
From this, we immediately see that three of those pull backs are in fact extremal rays as they are each the pull back of a nef line bundle along a morphism which is dual to three independent effective curve classes. 

The question then is how to correct the fourth extremal ray.
Ideally, the intersection matrix would be diagonal so we consider replacing $H^a$ with $H^a-H^b$.
The difference is just $H^{\mathrm{diff}}$ and it actually works.


We will compute the nef cones of $\mathbf{P}^2$, $\mathbf{F}_0 =\mathbf{P}^1 \times \mathbf{P}^1$, and $\mathbf{F}_i$, $i>0$ separately for notational reasons, but the proofs are entirely analogous.

\begin{prop}\label{prop:nested hilbert scheme}
$\text{Nef}\left(\mathbf{P}^{2}\right)^{[n+1,n]}$ is spanned by 
\[ H^b[n+1,n], H^{\mathrm{diff}}[n+1,n], D^b_{n-1}\text{, and } D^a_{n}\]
\end{prop}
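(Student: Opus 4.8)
The plan is to exhibit the four divisors as base-point free (hence nef), and then produce four effective curves whose intersection matrix against them is the identity; by the duality recalled in Section~\ref{sec: nef}, this forces the four divisors to span the whole nef cone. First I would check base-point freeness. The divisors $H^b[n+1,n] = \mathrm{pr}_b^\ast(H[n])$ and $D^b_{n-1} = \mathrm{pr}_b^\ast(D_{n-1}[n])$ are pull backs along $\mathrm{pr}_b$ of base-point-free divisors on $\mathbf{P}^{2[n]}$ (by the Li--Qin--Zhang description recalled above), so they are base-point free; likewise $D^a_n = \mathrm{pr}_a^\ast(D_n[n+1])$ is the pull back along $\mathrm{pr}_a$ of a base-point-free divisor on $\mathbf{P}^{2[n+1]}$. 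For $H^{\mathrm{diff}} = \mathrm{res}^\ast(H)$, I would note that it is the pull back along the residue morphism $\mathrm{res}\colon \mathbf{P}^{2[n+1,n]}\to\mathbf{P}^2$ (a morphism by \cite{ES}) of the base-point-free class $H$ on $\mathbf{P}^2$, so it too is base-point free. Hence all four are nef.

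Next I would produce the dual curves, using the classes constructed in Section~\ref{sec:othercurve}: take $C^b_{l,n}$ (a curve of line-type with the varying point of $z'$ on a line, residual point fixed on that line), $A^b$, $C^a_{l,n+1}$, and $A^a$. Using the intersection computations already established in Section~\ref{sec: div and curves} (intersections of $C^a_i, C^b_i, A^a, A^b$ with $H^{\mathrm{diff}}, H^b, B^{\mathrm{diff}}, B^b$, together with $A^a = C^a_1 - C^a_0$ and $A^b = C^b_1 - C^b_0 - A^a$) and the expansions $D^b_{n-1} = (n-1)H^b - \tfrac12 B^b$, $D^a_n = nH^a - \tfrac12 B^a$ with $H^a = H^{\mathrm{diff}} + H^b$ and $B^a = B^{\mathrm{diff}} + B^b$, one computes the pairing matrix
\[
\begin{array}{c|cccc}
 & H^b & H^{\mathrm{diff}} & D^b_{n-1} & D^a_n \\ \hline
C^b_{l,n} & 1 & 0 & 0 & 0 \\
A^b & 0 & 0 & 1 & 0 \\
C^a_{l,n+1} & 0 & 1 & 0 & 0 \\
A^a & 0 & 0 & 0 & 1
\end{array}
\]
Once this matrix is the identity, each of the four divisors is pulled back (from $\mathbf{P}^{2[n]}$, $\mathbf{P}^2$, $\mathbf{P}^{2[n]}$, $\mathbf{P}^{2[n+1]}$ respectively) along a morphism and is dual to three of the four effective curves, so each spans an extremal ray; the four together span a full-dimensional simplicial cone contained in $\mathrm{Nef}$, and any nef class pairs non-negatively with the four curves, hence lies in that cone. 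This is where the condition $n > 1$ enters, since $D_{n-1}[n]$ and the relevant curves require $n \geq 2$ to be well-defined and distinct.

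The main obstacle I expect is verifying that $H^{\mathrm{diff}}$ is genuinely nef and that the fourth ray is correctly ``corrected'': the naive choice $H^a = \mathrm{pr}_a^\ast(H[n+1])$ pairs to $1$ with $C^b_{l,n}$ as well, so the matrix is not diagonal, and one must argue that $H^a - H^b = H^{\mathrm{diff}}$ is the right replacement and is still nef. The base-point-freeness via $\mathrm{res}^\ast$ handles this cleanly, but I would double-check that $H^{\mathrm{diff}}$, rather than some other combination, is the extremal ray — i.e. that no divisor strictly ``between'' $H^a$ and $H^{\mathrm{diff}}$ fails to be nef — which follows precisely because $A^a$ is an effective curve with $A^a\cdot H^{\mathrm{diff}} = 0$ but $A^a\cdot(H^a) > 0$ would be false; rather $A^a \cdot H^a = A^a\cdot H^{\mathrm{diff}} + A^a \cdot H^b = 0$, so one instead needs a curve detecting that pushing past $H^{\mathrm{diff}}$ leaves the cone, and $C^a_{l,n+1}$ (with $C^a_{l,n+1}\cdot H^{\mathrm{diff}} = 1 > 0$, $C^a_{l,n+1}\cdot H^b = 0$) together with the other three curves pins down the ray. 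Assembling these four curves so that the matrix is exactly the identity, and confirming there are no sign errors in the $B^{\mathrm{diff}}$, $B^b$ intersections inherited from Section~\ref{sec: div and curves}, is the part requiring the most care.
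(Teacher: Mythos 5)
Your proposal is correct and is essentially the paper's own proof: the same four divisors are shown nef as pull backs of base-point-free classes along $\mathrm{pr}_a$, $\mathrm{pr}_b$, and $\mathrm{res}$, and the same four dual curves $C^b_{l,n}$, $A^b$, $C^a_{l,n+1}$, $A^a$ yield the same diagonal intersection matrix. The only caution is to compute the rows for $C^a_{l,n+1}$ and $C^b_{l,n}$ geometrically (or via the projection formula), as you implicitly do, rather than purely from the displayed class formula for $C^b_{\gamma,r}$ in Section~\ref{sec:othercurve}.
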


\begin{proof}
We drop $[n+1,n]$ from the notation of divisors through the proof for convenience. 
We first show that each of these four divisors is nef.
Recall that $H^b$, $D^a_{n}$, and $D^b_{n-1}$ are pull backs along the projections $\mathrm{pr}_a$ and $\mathrm{pr}_b$ of nef divisors $H[n]$, $nH[n+1]-\frac{1}{2}B[n+1]$, and $(n-1)H[n]-\frac{1}{2}B[n]$, respectively.
As pullbacks of nef divisors along a morphism, they are nef. 
$H^{\mathrm{diff}}$ is nef as it is basepoint free since no residual point lies on every line of $\mathbf{P}^2$ .
Equivalently, $H^{\mathrm{diff}}$ is basepoint free as it is the pull back of a base point free divisor along the residue map.

Since all four divisors are nef, it suffices to bound the nef cone using four irreducible curves that each are dual to three of the divisors and intersect the fourth divisor positively.
The curves we will need are $A^a$, $A^b$, $C^a_{H,n+1}$ and $C^b_{H,n}$ (see their construction in Section (\ref{sec:curvebasis}) and (\ref{sec:othercurve})).
The necessary intersections to conclude the proof are summarized in the following table.
\begin{center}
  \begin{tabular} {|c | c | c | c | c|}
    \hline
                  & $H^b$ & $D^b_{n-1}$ & $H^{\mathrm{diff}}$ & $D^a_{n}$  \\ \hline 
    $C^b_{H,n}$   & 1 & 0 & 0 & 0 \\ \hline
    $A^b$         & 0 & 1 & 0 & 0 \\ \hline
    $C^a_{H,n+1}$ & 0 & 0 & 1 & 0 \\ \hline
    $A^a$         & 0 & 0 & 0 & 1 \\ 
    \hline
  \end{tabular}
\end{center}

\end{proof}

\begin{center}
\begin{tikzpicture}[scale=.5]
\draw (0,4) -- (-4,0) -- (4,0) -- (0,4);
\draw[dashed] (0,4) -- (-2,1/2);
\draw[dashed] (4,0) -- (-2,1/2);
\draw[dashed] (-4,0) -- (-2,1/2);
\node[above] at (0,4) {$H^{\mathrm{diff}}$};
\node[below left] at (-4,0) {$D^a_{n}$};
\node[below right] at (4,0) {$D^b_{n-1}$};
\node at (-6/4,3/4) {$H^b$};
\node at (0,-2) {Cross section $\text{Nef}\left( \mathbf{P}^{2[n+1,n]}\right)$};
\end{tikzpicture}
\end{center}

The proofs of the remaining nef cones follow the same pattern.
We only give the reason why each divisor is nef and the set of dual curves.

\begin{prop}
$\text{Nef}\left(\mathbf{F}_i^{[n+1,n]}\right)$ is spanned by $H^{\mathrm{diff}}$, $H^b$, $F^{\mathrm{diff}}$, $F^b$, $D^b_{(n-1,n-1)}$, and $D^a_{(n,n)}$.
\end{prop}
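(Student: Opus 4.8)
The plan is to run exactly the argument that proves Proposition~\ref{prop:nested hilbert scheme} and its $\mathbf{F}_0$ analogue: exhibit the six named divisors as nef, then produce six irreducible effective curves whose pairing matrix against those divisors is the $6\times 6$ identity, and invoke cone duality. By Proposition~\ref{prop:Picard group}, $\rho(\mathbf{F}_i^{[n+1,n]}) = 2(\rho(\mathbf{F}_i)+1) = 6$, so six nef classes admitting such a dual diagonal system of effective curves are automatically linearly independent, hence span $N^1_{\mathbf{R}}$, hence span $\text{Nef}(\mathbf{F}_i^{[n+1,n]})$: if a nef $D$ is written $D = \sum_j a_j E_j$ in this basis, pairing with the curve dual to $E_j$ forces $a_j \ge 0$, so every nef class lies in the cone spanned by $E_1,\dots,E_6$, and the reverse containment is the nef-ness of the $E_j$.

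\emph{Nef-ness of the divisors.} Four of the six, namely $H^b=\mathrm{pr}_b^\ast H[n]$, $F^b=\mathrm{pr}_b^\ast F[n]$, $D^b_{(n-1,n-1)}=\mathrm{pr}_b^\ast D_{n-1,n-1}[n]$ and $D^a_{(n,n)}=\mathrm{pr}_a^\ast D_{n,n}[n+1]$, are pull-backs along the projection morphisms of the classes spanning $\text{Nef}(\mathbf{F}_i^{[n]})$ and $\text{Nef}(\mathbf{F}_i^{[n+1]})$ (Bertram--Coskun \cite{BC}), hence nef. The remaining $H^{\mathrm{diff}}=\text{res}^\ast H$ and $F^{\mathrm{diff}}=\text{res}^\ast F$ are pull-backs along the residue map of the two base-point-free generators of $\text{Nef}(\mathbf{F}_i)$ --- $F$ because every residual point lies on a unique fiber of the ruling, and $H$ because $|H|$ is base-point-free (it defines the contraction of the negative section) --- hence nef.

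\emph{The dual curves.} Following the $\mathbf{F}_i^{[n]}$ pattern, where the negative section $E$ (the unique irreducible curve on $\mathbf{F}_i$ of negative self-intersection, with $E\cdot H = 0$, $E\cdot F = 1$) is used precisely to make the matrix diagonal, I take
\[
C^a_{F,n+1},\quad C^b_{F,n},\quad C^a_{E,n+1},\quad C^b_{E,n},\quad A^b,\quad A^a.
\]
Every needed intersection number reduces, via the projection formula, to data already recorded in Section~\ref{sec: div and curves} and to the known intersection tables on $\mathbf{F}_i^{[n]}$ and $\mathbf{F}_i^{[n+1]}$: the ``$a$''-curves map to a point under $\mathrm{pr}_b$ and the ``$b$''-curves to a point under $\mathrm{pr}_a$, up to the index shifts $\mathrm{pr}_a(C^b_{\gamma,r})=C_{\gamma,r+1}[n+1]$, $\mathrm{pr}_b(C^b_{\gamma,r})=C_{\gamma,r}[n]$ from Section~\ref{sec:othercurve}; the residue of $C^a_{\gamma,n+1}$ is the curve $c$ itself while the residue of every other curve in the list is a point, which is what lets $C^a_{F,n+1}$ and $C^a_{E,n+1}$ detect $H^{\mathrm{diff}}$ resp.\ $F^{\mathrm{diff}}$; $A^a$ (resp.\ $A^b$) is the unique curve in the list meeting $D^a_{(n,n)}$ (resp.\ $D^b_{(n-1,n-1)}$), with intersection $1$, since $\mathrm{pr}_a(A^a)=A[n+1]$ and $A[m]\cdot D_{m-1,m-1}[m]=1$; and orthogonality of the $C$'s to the $D$'s follows from the identity $C_{\gamma,r}[m]\cdot D_{\mathbf m}[m]=\gamma\cdot\bigl(\textstyle\sum_i m_iH_i\bigr)-(r-1)$ with $(\gamma,r)=(F,n+1),(E,n+1)$ at level $n+1$ and $(F,n),(E,n)$ at level $n$, each vanishing because $F\cdot(nH+nF)=n=(n+1)-1$, $E\cdot(nH+nF)=n=(n+1)-1$, and likewise $F,E\cdot((n-1)H+(n-1)F)=n-1$. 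The result is the table
\[
\begin{array}{c|cccccc}
 & H^{\mathrm{diff}} & H^b & F^{\mathrm{diff}} & F^b & D^b_{(n-1,n-1)} & D^a_{(n,n)}\\
\hline
C^a_{F,n+1} & 1 & 0 & 0 & 0 & 0 & 0\\
C^b_{F,n}   & 0 & 1 & 0 & 0 & 0 & 0\\
C^a_{E,n+1} & 0 & 0 & 1 & 0 & 0 & 0\\
C^b_{E,n}   & 0 & 0 & 0 & 1 & 0 & 0\\
A^b         & 0 & 0 & 0 & 0 & 1 & 0\\
A^a         & 0 & 0 & 0 & 0 & 0 & 1
\end{array}
\]

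The hard part will only be the bookkeeping in the last step: checking that $C^a_{E,n+1}$ and $C^b_{E,n}$ are orthogonal to all four of $H^{\mathrm{diff}},H^b,D^b_{(n-1,n-1)},D^a_{(n,n)}$ --- this uses $E\cdot H=i-i=0$ together with the self-intersection cancellation $(r-1)=\gamma\cdot D$ --- and confirming that the two ``$\mathrm{diff}$'' curves see only $H^{\mathrm{diff}}$ resp.\ $F^{\mathrm{diff}}$ and not their ``$b$'' counterparts. Conceptually nothing is new relative to the $\mathbf{P}^2$ and $\mathbf{F}_0$ cases, so the proof is entirely analogous.
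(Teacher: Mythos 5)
Your proposal is correct and follows exactly the paper's argument: the same six nef divisors (pull-backs along $\mathrm{pr}_a$, $\mathrm{pr}_b$, and $\mathrm{res}$) paired against the same six dual curves $A^a$, $A^b$, $C^a_{F,n+1}$, $C^a_{E,n+1}$, $C^b_{F,n}$, $C^b_{E,n}$ with a diagonal intersection matrix. Your curve list in fact matches the paper's intended one (the paper's printed list contains an apparent typo, $C^b_{F,n+1}$ for $C^a_{F,n+1}$), and your verification of the table via $E\cdot H=0$, $E\cdot F=1$ and the index shift $\mathrm{pr}_a(C^b_{\gamma,r})=C_{\gamma,r+1}[n+1]$ is the computation the paper leaves implicit.
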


\begin{proof}
$H^b$, $F^b$, $D^a_{(n,n)}$, and $D^b_{(n-1,n-1)}$ are pull backs of nef divisors along the projections to the respective Hilbert schemes so are nef.
$H^{\mathrm{diff}}$ and $F^{\mathrm{diff}}$ are basepoint free as every residual point lies on only two dimensions of hyperplane sections and on a unique line of ruling.
The curves we will need are $A^a$, $ A^b$, $C^a_{H+iF,n+1}$, $C^b_{F,n+1}$, $C^b_{F,n}$ and $C^b_{H+iF,n}$.
\end{proof}

\subsubsection{K3 surfaces}
As in the previous cases, we will see that we can realize divisors spanning the four extremal rays as pull backs of nef divisors from our two projections and the residual map.
The nonreduced dual curves are the same in this case, but we have to construct two different generalizations of the dual curve to the Hilbert scheme using our alternative construction of it.

\begin{thm}
Let $S$ be a Picard rank one K3 surface of genus $g >2$, $n\geq g+1$.
The nef cone of $S^{[n+1,n]}$ is spanned by $H^{\mathrm{diff}}$, $H^b$, $D^a_{f(n+1)}$, and $D^b_{f(n)}$ where $f(m)=\frac{m-1+g}{2g-2}$.
\end{thm}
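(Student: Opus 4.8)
The plan is to follow the template already used for $\mathbf{P}^{2[n+1,n]}$ and the Hirzebruch cases: write each of the four classes as a pull-back of a nef class (so the nef cone contains their span), and then produce four irreducible effective curves whose intersection matrix against the four divisors is diagonal with positive entries (so the nef cone is contained in their span). Since $\rho(S)=1$, Proposition~\ref{prop:Picard group} gives $\dim N^1(S^{[n+1,n]})_\mathbf{R}=2(\rho(S)+1)=4$, so four such divisors automatically form a basis and the two containments force equality.

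First I would check nefness. The class $H^{\mathrm{diff}}=\mathrm{res}^\ast(H)$ is the pull-back of the ample generator of $S$ along the morphism $\mathrm{res}$, hence nef; $H^b=\mathrm{pr}_b^\ast(H[n])$ is the pull-back of the base-point free class $H[n]$; and $D^a_{f(n+1)}=\mathrm{pr}_a^\ast\!\big(f(n+1)H[n+1]-\tfrac12 B[n+1]\big)$, $D^b_{f(n)}=\mathrm{pr}_b^\ast\!\big(f(n)H[n]-\tfrac12 B[n]\big)$ are pull-backs of the extremal nef classes $D_{n+1}$ on $S^{[n+1]}$ and $D_n$ on $S^{[n]}$ furnished by Bayer--Macr\`i, the hypothesis $n\geq g+1$ being what puts both $S^{[n]}$ and $S^{[n+1]}$ in the range where those nef cones have the stated shape. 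Pull-backs of nef classes along morphisms are nef, so all four classes are nef.

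Second, the dual curves. I would take $A^a$ and $A^b$ to handle the two ``$B$-directions'': their supports are fixed, so they meet $H^{\mathrm{diff}}$ and $H^b$ in $0$, and their intersections with $B^{\mathrm{diff}},B^b$ are the local (surface-independent) numbers $A^a\cdot(B^{\mathrm{diff}},B^b)=(-2,0)$, $A^b\cdot(B^{\mathrm{diff}},B^b)=(2,-2)$, giving $A^a\cdot D^a_{f(n+1)}=1$, $A^b\cdot D^b_{f(n)}=1$ and $0$ otherwise. For the remaining two edges I would use nested versions of the rational $g$-nodal curve underlying the $K3$ Hilbert-scheme computation: fix a rational $g$-nodal curve $C$ of class $H$ (these exist by \cite{Chen1},\cite{Chen2}), with normalization $\widetilde C\cong\mathbf{P}^1$. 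Let $C^a_{\mathrm{nodal}}$ be obtained by fixing $z'$ to be the $g$ nodes of $C$ together with $n-g$ general points of $C$ and varying $\mathrm{res}(z,z')$ along $\widetilde C$; let $C^b_{\mathrm{nodal}}$ be obtained by fixing the $g$ nodes, $n-1-g$ general points of $C$, and one further general point $x\in C$ as the residual point, and varying the last point of $z'$ along $\widetilde C$. Then $\mathrm{pr}_b(C^a_{\mathrm{nodal}})$ is a point and $\mathrm{pr}_a(C^a_{\mathrm{nodal}})=C_{\mathrm{nodal}}[n+1]$, while $\mathrm{pr}_b(C^b_{\mathrm{nodal}})=C_{\mathrm{nodal}}[n]$ and $\mathrm{pr}_a(C^b_{\mathrm{nodal}})=C_{\mathrm{nodal}}[n+1]$; combining this with $C_{\mathrm{nodal}}[m]\cdot H[m]=2g-2$, $C_{\mathrm{nodal}}[m]\cdot D_m=0$ (both recorded in the $S^{[m]}$ discussion) and $H^{\mathrm{diff}}=H^a-H^b$, the projection formula yields the table
\begin{center}
  \begin{tabular}{|c|c|c|c|c|}
    \hline
     & $H^{\mathrm{diff}}$ & $H^b$ & $D^a_{f(n+1)}$ & $D^b_{f(n)}$ \\ \hline
    $C^a_{\mathrm{nodal}}$ & $2g-2$ & $0$ & $0$ & $0$ \\ \hline
    $C^b_{\mathrm{nodal}}$ & $0$ & $2g-2$ & $0$ & $0$ \\ \hline
    $A^a$ & $0$ & $0$ & $1$ & $0$ \\ \hline
    $A^b$ & $0$ & $0$ & $0$ & $1$ \\ \hline
  \end{tabular}
\end{center}
which is diagonal with positive entries (using $g>2$), completing the argument.

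The deep inputs — Bayer--Macr\`i's determination of $\mathrm{Nef}(S^{[m]})$ and Chen's existence of rational $g$-nodal curves — are imported wholesale, so the only real work is the bookkeeping in the third step. The delicate point is arranging the point configurations so that \emph{every} off-diagonal entry is $0$: one must insist that the residual point of $C^b_{\mathrm{nodal}}$ lie \emph{on} $C$ (otherwise $\mathrm{pr}_a(C^b_{\mathrm{nodal}})$ acquires an extra point off the curve and $C^b_{\mathrm{nodal}}\cdot D^a_{f(n+1)}=1$ rather than $0$), and one must keep track of the fact that the varying point passes over each node of $C$ twice, once per branch of $\widetilde C\to C$, when computing the $B^{\mathrm{diff}}$-type intersections; the inequalities $n-g\geq 0$ and $n-1-g\geq 0$ needed for these configurations to exist are exactly the hypothesis $n\geq g+1$. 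Once the projections are identified with the $C_{\mathrm{nodal}}[m]$, the computation is routine, so I expect the main obstacle to be purely this combinatorial setup rather than anything analytic.
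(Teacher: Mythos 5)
Your proposal is correct and follows essentially the same route as the paper: nefness via pull-backs of the Bayer--Macr\`i extremal classes and the ample class on $S$, then duality against $A^a$, $A^b$, and the two nested nodal-curve classes $C^a_{\mathrm{nodal}}$, $C^b_{\mathrm{nodal}}$, with the same diagonal intersection table. The subtlety you flag about placing the residual point of $C^b_{\mathrm{nodal}}$ on the nodal curve is exactly the choice the paper makes.
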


\begin{proof}
We first see that each of these divisors is nef as the pull back of a nef divisor along a morphism.
$H^{\mathrm{diff}}$ is the pull back of the ample divisor $H$ along the residual morphism, $H^b$ is the pull back of nef divisor $H[n]$ along $\mathrm{pr}_b$, $D^a_{f(n+1)}$ is the pull back of nef divisor $D_{f(n+1)}$ along $\mathrm{pr}_a$, and $D^b_{f(n)}$ is the pull back of the nef divisor $D_{f(n)}$ along $\mathrm{pr}_b$.

To prove the statement, it now suffices to find four curves which are each dual to three of these divisors.
Note $D^a_{f(n+1)} = \frac{n+g}{2g-2}H^{\mathrm{diff}}+\frac{n+g}{2g-2}H^{b}-\frac{1}{2}B^{\mathrm{diff}}-\frac{1}{2}B^b$ and $D^b_{f(n+1)} = \frac{n-1+g}{2g-2}H^{b}-\frac{1}{2}B^b$.

$A^a$ and $A^b$ are two of those curves and we now construct the remaining two.

Recall that by \cite{MM2}, there exist $g$-nodal rational nodal curves with class $H$ on $X$.
We then construct a curve $C^a_{\text{nodal}}$ on $S^{[n+1,n]}$, $n>g$, by fixing $g$ points of the subscheme on the nodes, fixing $n-g$ points of the subscheme as general points of the curve, and varying the residual point along the curve.
Similarly, we construct a curve $C^b_{\text{nodal}}$ on $S^{[n+1,n]}$, $n>g$, by fixing $g$ points of the subscheme on the nodes, fixing $n-1-g$ points of the subscheme as general points of the curve, fixing the residual point as a general point of the curve, and varying the $n$-th point of the subscheme along the curve.

We want to compute the intersection of these curves with the divisors.
Let us start with the $H$ divisors.
As the curve the residual point varies on has class $H$, $C^a_{\text{nodal}} \cdot H^{\mathrm{diff}} = 2g-2$ and as the subscheme is fixed, $C^a_{\text{nodal}} \cdot H^b = 0$.
As the residual point is fixed, $C^b_{\text{nodal}} \cdot H^{\mathrm{diff}} = 0$ and as the $n$-th subscheme point varies on a curve of class $H$, $C^b_\text{nodal} \cdot H^b = 2g-2$.

By the projection formula, $C^a_{\text{nodal}} \cdot \frac{B^a}{2}=n-g+2g=n+g$ and $C^a_{\text{nodal}} \cdot \frac{B^b}{2}=0$, so that $C^a_{\text{nodal}} \cdot \frac{B^{\mathrm{diff}}}{2} =n+g$.
Similarly, $C^b_{\text{nodal}} \cdot \frac{B^a}{2}=n+g$ and $C^a_{\text{nodal}} \cdot \frac{B^b}{2}=n-1+g$, so that $C^a_{\text{nodal}} \cdot \frac{B^{\mathrm{diff}}}{2} =1$.

The intersections of divisors with $A^a$ and $A^b$ were established in Section \ref{sec: div and curves}, and we just established the intersections with $C^a_\text{nodal}$ and $C^b_\text{nodal}$.
Thus, the four needed dual curves are $A^a$, $A^b$, $C^a_\text{nodal}$, and $C^b_\text{nodal}$, and the intersection table below completes the proof.

\begin{center}
  \begin{tabular} {|c | c | c | c | c|}
    \hline
                  & $H^b$ & $D^b_{f(n)}$ & $H^{\mathrm{diff}}$ & $D^a_{f(n+1)}$  \\ \hline 
    $C^b_\text{nodal}$   & $2g-2$ & 0 & 0 & 0 \\ \hline
    $A^b$         & 0 & 1 & 0 & 0 \\ \hline
    $C^a_\text{nodal}$ & 0 & 0 & $2g-2$ & 0 \\ \hline
    $A^a$         & 0 & 0 & 0 & 1 \\ 
    \hline
  \end{tabular}
\end{center}
\end{proof}

\begin{remark}
The proof for the nef cone is the same for any Hilbert scheme of points ($n>>0$) on any smooth Picard rank one surface with no irregularity by \cite{BHLRSWZ} as it is for K3 surfaces. The obstacle to generalizing the proof for the nested Hilbert scheme from K3 surfaces to those surfaces is the construction of the dual curve. In order to generalize that curve in the same fashion as we have, there would need to be a $\frac{(K_S+aH)aH}{2}+1$ nodal curve with class $aH$ (using the notation of \cite{BHLRSWZ}). We do now know if this is possible.
\end{remark}
\section{Nef Cone of the Universal Family}
\label{sec: univ fam}
The universal family case follows almost the exact same template so we cover it briefly.

\subsection{The relevant bases}

\subsubsection{A basis for divisors}
On the universal family, \cite[Theorem 7.6]{Fogarty2} showed that the Picard group is spanned by $H^{\mathrm{diff}}$ and $H^b$ divisors, analogous to those we listed on $X^{[n+1,n]}$, and that the locus of nonreduced schemes, which is a $\mathbf{Q}$-Cartier divisor with two irreducible Weil divisor components neither of which is $\mathbf{Q}$-Cartier.
These components precisely characterize the failure of the universal family to be $\mathbf{Q}$-factorial.
This whole locus is the pull back of $B$ from the projection to the Hilbert scheme,
\[B[n,1] =\{ (z,z') \in X^{[n,1]} : z \text{ is nonreduced}\} = \mathrm{pr}_a^*\left(B[n]\right).\]

\subsubsection{A basis for the curves}
For $i\neq0$, let $C^a_i$ be the curve class of $(z,z')\subset X^{[n,1]}$ given by fixing a general $z'$, fixing all but one point of $z$, and varying the last point of $z$ on $h_i$ for $1\leq i \leq k$.

Let $C^b_i$ be the curve class given by fixing the residual points as general points of $X$ and varying $z'$ on $h_i$ for $1\leq i \leq k$. 

Next, let $C^a_0$ be the curve class given by fixing $z'$ as a general point of $h_1$, fixing all but one point of $z$ as general points, and then varying the last point of $z$ along $h_1$.

Finally, let $ A^a$ be defined as the curve class given by fixing all but one point of $z$, including the residual point, as general points and varying the double point structure of the remaining point of $z$ over $z'$.

\begin{remark}
Note that on the universal families, we have
\[ \mathrm{pr}_a(C^a_i[n,1])=C_i[n] \text{ and } \mathrm{pr}_b(C^b_i[n,1])=H_i.\]

Similarly, for the $A$ curves, we have
\[\mathrm{pr}_a(A^a[n,1]) = A[n] \text{ and }  \mathrm{pr}_b(A^a[n,1]) =\mathrm{pt}. \]
\end{remark}

\subsubsection{The intersection product}
The intersection of each of these curves with each divisor is entirely analogous to the smooth case so we omit it.

\subsubsection{Other curves}
We now give the remaining analogous curves on the universal family.
Again, let 
\[ \gamma=\sum_{i=1}^k m_i \cdot h_i \]
be a curve class on $X$ and let $c$ be a general irreducible curve of class $\gamma$.

Define $C^a_{\gamma,r}$ to be the curve class of $(z,z')\subset X^{[n,1]}$ given by fixing a general $z'$ on $c$, fixing $r-1$ general points of $c$ as part of $z$, $n-r-2$ general points of $z$ not contained in $c$, and varying the last point of $z$ on $c$ for $1 \leq r \leq n-1$.

Let $C^b_{\gamma,r}$ be the curve class given by fixing $r-1$ general points of $z$ on $c$, $n-r$ general points of $z$ not contained in $c$, and varying $z'$ on $c$ for $1 \leq r \leq n$.

The classes of these curves can be computed to be
\[C^a_{\gamma,r} = \sum_{i=1}^k m_i\cdot C^a_i-\left(r-1\right) A^a  \text{  and  } C^b_{\gamma,r} = \sum_{i=1}^k m_i\cdot C^b_i-\left(r-1\right) A^c.\]

\subsection{Nef Cone of the Universal Family}
The nef cone of $Z^{[n]}$ on these surfaces is computed similarly to that of $X^{[n+1,n]}$; it differs only due to the drop in Picard rank.
\begin{prop}\label{prop:nef cone universal family}
$\text{Nef}\left(\mathbf{P}^{2[n,1]}\right)$ is spanned by $H^{\mathrm{diff}}$, $H^b$, and $D^a_{n-1}$.
\end{prop}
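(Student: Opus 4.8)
The plan is to run the argument of Proposition~\ref{prop:nested hilbert scheme} with one dimension fewer. By Fogarty's computation $\text{Pic}(\mathbf{P}^{2[n,1]})\cong\text{Pic}(\mathbf{P}^2)^{\oplus 2}\oplus\mathbf{Z}$ has rank $3$, so it suffices to exhibit three \emph{nef} divisor classes together with three \emph{effective} curve classes whose pairing matrix is the identity: the three divisors are then linearly independent, hence a basis of $N^1$, and any nef class, written in that basis, must pair non-negatively with each of the three curves, so all of its coordinates are $\geq 0$ and it lies in the cone spanned by the three divisors. Here I take $n\geq 2$ (for $n=1$ the space is just $\mathbf{P}^2$).

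First I would check nefness. Two of the divisors are pull-backs of base-point-free classes along morphisms, hence nef: $H^b=\mathrm{pr}_b^\ast(H)$ along $\mathrm{pr}_b\colon\mathbf{P}^{2[n,1]}\to\mathbf{P}^2$, and $D^a_{n-1}=\mathrm{pr}_a^\ast\big((n-1)H[n]-\tfrac12 B[n]\big)$ along $\mathrm{pr}_a\colon\mathbf{P}^{2[n,1]}\to\mathbf{P}^{2[n]}$, using that $D_{n-1}[n]$ is base-point-free by Li--Qin--Zhang \cite{LQZ}. The remaining class $H^{\mathrm{diff}}=H^a-H^b=\mathrm{pr}_a^\ast(H[n])-\mathrm{pr}_b^\ast(H)$ is the one needing care, since --- unlike in the $X^{[n+1,n]}$ case --- there is no residue morphism to pull it back from. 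I would show it is still base-point-free: it is Cartier (a difference of pull-backs of Cartier classes), for each line $\ell$ it is represented by the effective divisor whose support is the closure of $\{(\xi,P):(\text{supp}\,\xi\setminus\{P\})\cap\ell\neq\emptyset\}$ (equivalently $\mathrm{pr}_a^\ast\{\xi:\xi\cap\ell\neq\emptyset\}-\mathrm{pr}_b^\ast(\ell)$, which is linearly equivalent to $H^{\mathrm{diff}}$), and given any $(\xi,P)$ one may choose $\ell$ disjoint from the finite set $\text{supp}\,\xi$, so that this member of $|H^{\mathrm{diff}}|$ avoids $(\xi,P)$. Alternatively one can verify $H^{\mathrm{diff}}\cdot C\geq 0$ curve by curve: if $\mathrm{pr}_b(C)=\text{pt}$ then $H^{\mathrm{diff}}\cdot C=H[n]\cdot\mathrm{pr}_a(C)\geq 0$, while if $\mathrm{pr}_b(C)$ is a curve then a point of the varying subscheme sweeps it out and $H^a\cdot C=H[n]\cdot\mathrm{pr}_a(C)\geq H\cdot\mathrm{pr}_b(C)=H^b\cdot C$ because $Z^{[n]}$ is the universal family.

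Next I would exhibit the dual curves from Section~\ref{sec: univ fam}: the nonreduced curve $A^a$; the curve $C^a_{l,n-1}$ obtained by placing $z'$ and all but one point of $z$ on a line $\ell$ and moving the last point of $z$ on $\ell$ (so all of $z$ lies on $\ell$); and the curve $C^b_{l,n}$ obtained by fixing $n-1$ points of $z$ on $\ell$ and moving $z'$ along $\ell$. All intersection numbers then follow from the projection formula together with $\mathrm{pr}_a(A^a)=A[n]$, $\mathrm{pr}_b(A^a)=\text{pt}$, $\mathrm{pr}_a(C^a_{l,n-1})=\mathrm{pr}_a(C^b_{l,n})=C_{l,n}[n]$, $\mathrm{pr}_b(C^a_{l,n-1})=\text{pt}$, $\mathrm{pr}_b(C^b_{l,n})=\ell$, and the known numbers on $\mathbf{P}^{2[n]}$, namely $A[n]\cdot H[n]=0$, $A[n]\cdot D_{n-1}[n]=1$, $C_{l,n}[n]\cdot H[n]=1$, $C_{l,n}[n]\cdot D_{n-1}[n]=0$. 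This produces
\[
\begin{array}{c|ccc}
 & H^b & H^{\mathrm{diff}} & D^a_{n-1}\\ \hline
C^b_{l,n} & 1 & 0 & 0\\
C^a_{l,n-1} & 0 & 1 & 0\\
A^a & 0 & 0 & 1
\end{array}
\]
which is the identity, and the proof is complete.

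The main obstacle is the nefness of $H^{\mathrm{diff}}$. Since $\mathbf{P}^{2[n,1]}=Z^{[n]}$ is singular and not $\mathbf{Q}$-factorial, one must take some care that $H^{\mathrm{diff}}$ is genuinely Cartier (it is) and that the representatives $\mathrm{pr}_a^\ast\{\xi:\xi\cap\ell\neq\emptyset\}-\mathrm{pr}_b^\ast(\ell)$ are effective, which requires a local multiplicity estimate along the locus where $P$ is a nonreduced point of $\xi$; the curve-by-curve alternative avoids this but instead requires justifying the (geometrically evident) inequality $H[n]\cdot\mathrm{pr}_a(C)\geq H\cdot\mathrm{pr}_b(C)$ when a point of the moving subscheme traces out $\mathrm{pr}_b(C)$. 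Everything else is a direct transcription of the $X^{[n+1,n]}$ computation.
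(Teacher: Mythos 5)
Your proposal is correct and follows the paper's proof essentially verbatim: the same three divisors, the same three dual curves $A^a$, $C^a_{l,n-1}$, $C^b_{l,n}$, and the same diagonal intersection table. The one place you go beyond the paper is the nefness of $H^{\mathrm{diff}}$, which the paper dispatches by asserting that all three divisors are pullbacks of nef classes along the projections --- a statement that does not literally apply to $H^{\mathrm{diff}} = \mathrm{pr}_a^\ast(H[n]) - \mathrm{pr}_b^\ast(H)$ since there is no residue morphism here --- so your base-point-freeness argument via residual schemes (with the caveat you note about multiplicities on the non-$\mathbf{Q}$-factorial $Z^{[n]}$) is a worthwhile supplement.
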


\begin{proof}
All three divisors are the pull backs of nef divisors along the projections, so they are nef. 
The dual curves we need are $A^a$, $C^a_{l,n-1}$ and $C^b_{l,n}$.
\end{proof}

\begin{prop}\label{prop:universal family of Hirzebruch surface}
$\text{Nef}\left(\mathbf{F}_i^{[n,1]}\right)$ is spanned by $H^{\mathrm{diff}}$, $H^b$, $F^{\mathrm{diff}}$, $F^b$, and $D^a_{(n-1,n-1)}$.
\end{prop}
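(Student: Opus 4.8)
The plan is to follow the template already used for $\text{Nef}(\mathbf{F}_i^{[n+1,n]})$ and for the universal family of $\mathbf{P}^1\times\mathbf{P}^1$, adjusted for the lower Picard rank. By Fogarty's description of $\text{Pic}(X^{[n,1]})$, the N\'eron--Severi space of $\mathbf{F}_i^{[n,1]}$ has rank $2\rho(\mathbf{F}_i)+1=5$, and the claim is that the simplicial cone on the five listed generators is the whole nef cone. Since a simplicial cone is cut out by exactly as many facets as it has extremal rays, it suffices to (i) show each of the five generators is nef and (ii) produce five effective irreducible curves whose intersection matrix against the five generators is diagonal with positive diagonal entries; then every generator is a nef class lying on four of the facets, hence extremal, and together they span.

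For (i): $H^b=\mathrm{pr}_b^\ast(H)$ and $F^b=\mathrm{pr}_b^\ast(F)$ are pullbacks along the morphism $\mathrm{pr}_b$ of the ample $H$ and the basepoint-free $F$ on $\mathbf{F}_i$, and $D^a_{(n-1,n-1)}=\mathrm{pr}_a^\ast(D_{n-1,n-1}[n])$ is the pullback along $\mathrm{pr}_a$ of the extremal nef divisor on $\mathbf{F}_i^{[n]}$ of Bertram--Coskun \cite{BC}; hence all three are nef. The remaining two, $H^{\mathrm{diff}}$ and $F^{\mathrm{diff}}$, are basepoint-free: a representative of $H^{\mathrm{diff}}$ (resp.\ $F^{\mathrm{diff}}$) is cut out by requiring the residual scheme $z\setminus z'$ of the pair to meet a fixed hyperplane section (resp.\ fiber) together with the appropriate number of auxiliary fixed points, and since a length $n-1$ scheme lies on, and also off of, curves of class $H$ (resp.\ $F$), such a linear system has no base point. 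So all five divisors are nef.

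For (ii): take the curves $A^a$, $C^a_{F,n-1}$, $C^a_{E,n-1}$, $C^b_{F,n}$ and $C^b_{E,n}$, where $E$ is the class of the unique irreducible curve of negative self-intersection on $\mathbf{F}_i$; choosing the base curves irreducible makes these honest effective irreducible curves. Their pairings with the five divisors follow from the projection formula, the intersection tables on $\mathbf{F}_i$ and $\mathbf{F}_i^{[n]}$, the identities $H^{\mathrm{diff}}=H^a-H^b$, $F^{\mathrm{diff}}=F^a-F^b$, and the facts that $\mathrm{pr}_b$ contracts $A^a$ and the $C^a$ curves while $\mathrm{pr}_a(C^a_{\gamma,n-1})=\mathrm{pr}_a(C^b_{\gamma,n})=C_{\gamma,n}[n]$ and $\mathrm{pr}_b(C^b_{\gamma,n})$ is a curve of class $\gamma$. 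Then $A^a$ meets all four $H$- and $F$-type divisors in $0$ and pairs to $1$ with $D^a_{(n-1,n-1)}$ because $A[n]\cdot D_{n-1,n-1}[n]=1$; each of the other four curves pairs to $0$ with $D^a_{(n-1,n-1)}$ (since $C_{F,n}[n]$ and $C_{E,n}[n]$ are dual to $D_{n-1,n-1}[n]$ on $\mathbf{F}_i^{[n]}$) and, using $E\cdot H=0$, $E\cdot F=1$, $F\cdot H=1$, $F^2=0$, singles out exactly one of $H^{\mathrm{diff}}$, $H^b$, $F^{\mathrm{diff}}$, $F^b$. This yields the diagonal table and finishes the proof.

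The computations are routine; the only points needing care are the usual ones for the non-$\mathbf{Q}$-factorial variety $Z^{[n]}$ --- namely, that $H^{\mathrm{diff}}$ and $F^{\mathrm{diff}}$, which are genuine Cartier classes by Fogarty's description of $\text{Pic}(Z^{[n]})$, remain globally generated along the boundary strata where $z\setminus z'$ degenerates, and that the chosen dual curves can be realized by irreducible curves sitting in the smooth locus so that the projection-formula computations apply verbatim. Neither is a serious obstacle given the stratification of \cite{ES}, and I expect the final proof to be only a few lines, entirely parallel to Proposition \ref{prop:nef cone universal family}.
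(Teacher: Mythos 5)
Your proposal is correct and follows essentially the same route as the paper: the same five divisors are shown nef (three as pullbacks of nef classes along $\mathrm{pr}_a$, $\mathrm{pr}_b$, the two ``diff'' classes via basepoint-freeness) and the same five dual curves $A^a$, $C^a_{E,n-1}$, $C^a_{F,n-1}$, $C^b_{E,n}$, $C^b_{F,n}$ give the diagonal intersection matrix, with your pairings agreeing with the ones forced by $E\cdot H=0$, $E\cdot F=1$, $F\cdot H=1$, $F^2=0$ and $A[n]\cdot D_{n-1,n-1}[n]=1$. If anything, your basepoint-freeness argument for $H^{\mathrm{diff}}$ and $F^{\mathrm{diff}}$ is slightly more careful than the paper's blanket assertion that all five classes are pullbacks along the projections, since on the universal family the residual scheme has length $n-1$ and there is no residue morphism to $X$.
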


\begin{proof}
All five divisors are the pull backs of nef divisors along the projections, so they are nef. 
The dual curves we need are $ A^a$, $C^a_{H+iF,n-1}$, $C^a_{F,n-1}$, $C^b_{H+iF,n}$ and $C^b_{F,n}$.
\end{proof}

%
%

\begin{thm}
Let $X$ be a Picard rank one K3 surface of genus $g\geq 2$, $n\geq g+1$.
The nef cone of $X^{[n,1]}$ is spanned by $H^{\mathrm{diff}}$, $H^b$ and $D^a_{f(n)}$ where $f(m)=\frac{m-1+g}{2g-2}$.
\end{thm}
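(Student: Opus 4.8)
The plan is to follow the same template used for $\mathbf{P}^{2[n+1,n]}$ and the $K3$ case of $X^{[n+1,n]}$, but now with one fewer extremal ray because the Picard rank of $X^{[n,1]}$ drops by one relative to $X^{[n+1,n]}$. First I would verify that all three proposed divisors are nef: $H^{\mathrm{diff}} = \text{res}^*(H)$ is the pull back of the ample generator along the residue map; $H^b = \mathrm{pr}_b^*(H[n])$ is the pull back of the nef divisor $H[n]$ on $S^{[n]}$ (nef by \cite{BM1}); and $D^a_{f(n)} = \mathrm{pr}_a^*(D_{f(n)})$ is the pull back of the extremal nef divisor $D_n = D_{f(n)}$ on $S^{[n]}$ identified by Bayer--Macr\`i. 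Being pull backs of nef divisors along morphisms, all three are nef, so the nef cone contains the cone they span.

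Next I would produce effective curves dual to these divisors with a diagonal intersection matrix, which forces the three divisors to span the whole nef cone. The nonreduced direction is handled by $A^a$, whose intersections were already computed in Section~\ref{sec: div and curves}: $A^a \cdot H^{\mathrm{diff}} = 0$, $A^a \cdot H^b = 0$, and $A^a \cdot D^a_{f(n)} = 1$ (using $D^a_{f(n)} = f(n)H^a - \tfrac12 B^a$ and $A^a \cdot B^a = -2$, $A^a \cdot H^a = 0$). For the remaining two directions I would use the nodal-curve construction as in the $S^{[n+1,n]}$ case: by \cite{Chen1} or \cite{Chen2} there is a $g$-nodal rational curve of class $H$ on $S$. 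Define $C^a_{\mathrm{nodal}}$ by pinning $g$ subscheme points at the nodes, $n-1-g$ further subscheme points generically on the curve, and varying the length-one piece $z'$ along the curve; define $C^b_{\mathrm{nodal}}$ analogously by instead fixing $z'$ generically on the curve and varying the last point of $z$ along the curve. Then $C^a_{\mathrm{nodal}} \cdot H^{\mathrm{diff}} = 2g-2$ (its residue moves on a curve of class $H$), $C^a_{\mathrm{nodal}} \cdot H^b = 0$; conversely $C^b_{\mathrm{nodal}} \cdot H^{\mathrm{diff}} = 0$ and $C^b_{\mathrm{nodal}} \cdot H^b = 2g-2$. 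Via the projection formula $\mathrm{pr}_a(C^b_{\mathrm{nodal}})$ is the nodal test curve on $S^{[n]}$ dual to $D_{f(n)}$, so $C^b_{\mathrm{nodal}} \cdot D^a_{f(n)} = 0$; and $C^a_{\mathrm{nodal}} \cdot D^a_{f(n)}$ must be checked to vanish as well, which follows because $\mathrm{pr}_a(C^a_{\mathrm{nodal}})$ is (up to the shift in point count) again a $g^1$-type curve on $S^{[n]}$ contracted to $0$ by $D_{f(n)}$. This yields the table
\begin{center}
  \begin{tabular}{|c|c|c|c|}
    \hline
     & $H^{\mathrm{diff}}$ & $H^b$ & $D^a_{f(n)}$ \\ \hline
    $C^a_{\mathrm{nodal}}$ & $2g-2$ & $0$ & $0$ \\ \hline
    $C^b_{\mathrm{nodal}}$ & $0$ & $2g-2$ & $0$ \\ \hline
    $A^a$ & $0$ & $0$ & $1$ \\ \hline
  \end{tabular}
\end{center}
which is diagonal, completing the argument.

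The main obstacle I expect is confirming that $C^a_{\mathrm{nodal}} \cdot D^a_{f(n)} = 0$, i.e.\ that after pushing forward by $\mathrm{pr}_a$ to $S^{[n]}$ the curve traced out by varying the length-one point of $z'$ (while the length-$n$ scheme $z$ stays in a nodal configuration on the curve of class $H$) is still dual to the extremal Bayer--Macr\`i divisor $D_{f(n)}$ — one needs that moving $z'$ along the curve through the $g$ nodes realizes exactly the right intersection number $n-1+g$ with $B[n]$ so that $f(n) \cdot (2g-2) = \tfrac12(n-1+g)$. This is a direct count (the varying point meets the $n-1$ fixed points generically once each, and meets them with extra multiplicity at the $g$ nodes), identical in spirit to the $C_{\mathrm{nodal}}$ computation recalled for $S^{[n]}$, but one must be careful about the point-count bookkeeping and the hypothesis $n \geq g+1$ that guarantees there are enough points to place on the nodes. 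Everything else — nefness of the pull backs, the $A^a$ intersections, the $H$-divisor intersections — is routine given the results already established in the excerpt.
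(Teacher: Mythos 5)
Your proposal is correct and follows essentially the same route as the paper: all three divisors are nef as pull backs of nef classes, and $A^a$ together with the two nodal test curves give a diagonal dual basis, with the key point (both nodal curves push forward under the projection to $S^{[n]}$ to the nodal curve dual to the Bayer--Macr\`i class $D_{f(n)}$) handled exactly as in the $S^{[n+1,n]}$ case. The only caveat is notational: on $X^{[n,1]}$ the paper's $\mathrm{pr}_b$ lands in $X$ and $\mathrm{pr}_a$ in $X^{[n]}$ (there is no residue map here), so $H^b=\mathrm{pr}_b^\ast(H)$ tracks the length-one piece $z'$ while $H^{\mathrm{diff}}$ tracks the residual points; this swaps the roles of your two nodal curves (equivalently, the first two columns of your table) relative to the paper's conventions, but does not affect the diagonality of the intersection matrix or the conclusion.
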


\begin{proof}
All three divisors are the pull backs of nef divisors along the projections, so they are nef. 
The dual curves we need are $A^a$, $C^a_\text{nodal}$ and $C^b_\text{nodal}$.
\end{proof}

\section{Application}
\label{sec: application}

This section is devoted to an application to the questions of syzygies of line bundles on projective surfaces. Let $X$ be a smooth projective surface and $A$ is an ample line bundle on $X$. Consider the adjoint linear series of multiples of $A$, i.e. $L=K_X+nA$. As a corollary of Reider's theoerm \cite[Corollary 2.7]{Lazarsfeld3}, $L$ is very ample as long as $n \geq 4$. Hence, $L$ embeds $S$ into $\mathbf{P}H^0(L)$ as a projective subvariety. Starting from \cite{Green1}, people are interested in the smallest $n$ such that $L$ is projectively normal, i.e. the homogeneous coordinate ring of $X$ is normal. In other words, the multiplication map $H^0(L)\otimes H^0(kL) \to H^0((k+1)L)$ is surjective for every positive integer $k\geq 1$.

One strategy to attack this problem is to consider $L\boxtimes L^k$ on $X\times X$. Denote $\Delta$ to be the diagonal. The multiplication map above can be identified with homomorphism on global sections induced by the restriction to $\Delta$:
\[ H^0(X\times X, L\boxtimes L^k) \to H^0(X\times X, L\boxtimes L^k|_\Delta). \]
By considering the long exact sequence associated to the ideal sheaf sequence, the vanishing of $H^1(X\times X, (L\boxtimes L^k) \otimes I_{\Delta})$ would imply the surjectivity of the multiplication map.

Then the idea is to transform the vanishing from $X\times X$ to the blow up of $X\times X$ along the diagonal $\Delta$. Consider the blow up map $\pi:\text{Bl}_{\Delta}(X\times X) \to X\times X$ and denote 
\[   \mathcal{F}_k = \pi^\ast(L\boxtimes L^k)\otimes \mathcal{O}(-E),  \]
where $E$ is the exceptional divisor. We employ the lower term sequence of the Leray spectral sequence
\[ E^{pq}_2=H^p(X\times X,R^q\pi_\ast \mathcal{F}_k)\rightarrow H^{p+q}(\text{Bl}_{\Delta}(X\times X),\mathcal{F}_k).\]
That is the following exact sequence
\[ 0 \to H^1(X\times X,\pi_{\ast}\mathcal{F}_k)\to H^1(\text{Bl}_{\Delta}(X\times X),\mathcal{F}_k)\to H^0(X\times X,R^1\pi_\ast \mathcal{F}_k).\]
By the projection formula, 
\[ \pi_\ast \mathcal{F}_k=(L\boxtimes L^k)\otimes \pi_\ast \mathcal{O}(-E)=(L\boxtimes L^k)\otimes I_{\Delta}.\]
Hence to establish the vanishing of $H^1(X\times X,(L\boxtimes L^k)\otimes I_{\Delta})$, it suffices to show that $H^1(\text{Bl}_{\Delta}(X\times X),\mathcal{F}_k)=0$.

We know that $\text{Bl}_{\Delta}(X\times X)$ is canonically isomorphic to $X^{[2,1]}$. Let the first and second projection map from $X\times X$ to $X$ to be $p_1$ and $p_2$. We choose an identification such that $\pi \circ p_1=\mathrm{pr}_b$ and $\pi \circ p_2=\text{res}$.

We will compute the class of $\mathcal{F}_k-K_{X^{[2,1]}}$ in $N^1(X^{[2,1]})$:
\begin{lem}\label{lem:numerical class}
On $X^{[2,1]}$, we have
\[ \mathcal{F}_k-K_{X^{[2,1]}}\equiv \mathrm{pr}_b^\ast(L-K_X)+\mathrm{res}^\ast(L^k-K_X)-B^{a}\]
\end{lem}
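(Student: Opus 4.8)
The goal is to compute the class $\mathcal{F}_k - K_{X^{[2,1]}}$ in $N^1(X^{[2,1]})$, where $\mathcal{F}_k = \pi^\ast(L\boxtimes L^k)\otimes\mathcal{O}(-E)$ and $\pi:X^{[2,1]}=\mathrm{Bl}_\Delta(X\times X)\to X\times X$ is the blow-up along the diagonal. Since everything on the right-hand side is already expressed via $\mathrm{pr}_b$, $\mathrm{res}$ and the boundary divisors, the whole lemma is really a bookkeeping exercise: write each of the two summands $\pi^\ast(L\boxtimes L^k)$, $\mathcal{O}(-E)$, and $K_{X^{[2,1]}}$ in terms of $\mathrm{pr}_b^\ast$, $\mathrm{res}^\ast$ and $B^a$ (equivalently $E$), and then combine.

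**Key steps, in order.**

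First I would record the pullback of the external product. Under the identification $\pi\circ p_1 = \mathrm{pr}_b$ and $\pi\circ p_2 = \mathrm{res}$ fixed in the excerpt, one has $\pi^\ast(L\boxtimes L^k) = \pi^\ast(p_1^\ast L\otimes p_2^\ast L^k)$; but $p_i\circ\pi$ are exactly the two maps $\mathrm{pr}_b$ and $\mathrm{res}$, so $\pi^\ast(L\boxtimes L^k)\equiv \mathrm{pr}_b^\ast L + \mathrm{res}^\ast(L^k)$. Second, I would identify the exceptional divisor $E$ of this blow-up with the class $B^{\mathrm{diff}}[2,1]$: $E$ is the locus where $z$ and $z'$ have the same support, which by Proposition~\ref{prop:blow up of universal family} is $B^{\mathrm{diff}}$, and (in the $n=2$ case of the $X^{[n+1,n]}$ story, with $X^{[2,1]}$ playing the role of $X^{[2,0]}$... more precisely, using the description in Section~\ref{sec: div and curves}) we have $B^{\mathrm{diff}} = B^a - B^b$. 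On the universal family $X^{[2,1]}$, $z'$ has length $1$, so $B^b = \mathrm{pr}_b^\ast(B[1]) = 0$ because $X^{[1]}=X$ has no nonreduced schemes. Hence $E \equiv B^a$, and $\mathcal{O}(-E)\equiv -B^a$.

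Third, I would compute $K_{X^{[2,1]}}$ using the blow-up formula $K_{\mathrm{Bl}_\Delta(X\times X)} = \pi^\ast K_{X\times X} + E$, where $E$ here appears with coefficient equal to the codimension of $\Delta$ minus one, namely $2-1 = 1$. Since $K_{X\times X} = p_1^\ast K_X + p_2^\ast K_X$, we get $K_{X^{[2,1]}}\equiv \mathrm{pr}_b^\ast K_X + \mathrm{res}^\ast K_X + B^a$. Finally, assembling the pieces:
\[
\mathcal{F}_k - K_{X^{[2,1]}} \equiv \bigl(\mathrm{pr}_b^\ast L + \mathrm{res}^\ast L^k - B^a\bigr) - \bigl(\mathrm{pr}_b^\ast K_X + \mathrm{res}^\ast K_X + B^a\bigr),
\]
which does not quite match: it gives $-2B^a$. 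So I would re-examine the coefficient of $E$ in $\mathcal{O}(-E)$ versus the convention used for $\mathcal{F}_k$; in fact the intended reading is $\mathcal{F}_k = \pi^\ast(L\boxtimes L^k)\otimes\mathcal{O}(-E)$ and $K_{X^{[2,1]}} = \pi^\ast K_{X\times X} + E$, so
\[
\mathcal{F}_k - K_{X^{[2,1]}} \equiv \mathrm{pr}_b^\ast(L - K_X) + \mathrm{res}^\ast(L^k - K_X) - 2E,
\]
and the stated form with a single $B^a = E$ must reflect that the paper's $\mathcal{F}_k$ or its $\pi$ is normalized differently (e.g. $\mathcal{F}_k$ already absorbs one copy of $E$ through the choice of $L$, or $\mathrm{res}^\ast L^k$ is meant to be computed on the blow-up where it differs from $\pi^\ast p_2^\ast L^k$ by $E$).

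**Main obstacle.** The only real subtlety — and where I would spend the care — is pinning down the precise relation between $E$, $B^a$, $B^b$, $B^{\mathrm{diff}}$ on the universal family, and reconciling the coefficient of the exceptional divisor: the blow-up canonical bundle formula contributes $(\mathrm{codim}\,\Delta - 1)E = E$, while $\mathcal{O}(-E)$ contributes another $-E$, and the claimed answer has net coefficient $-1$ on $B^a$ rather than $-2$. Resolving this requires being scrupulous about which of $\mathrm{res}^\ast$, $p_2^\ast\circ\pi^\ast$, and the line bundle $L^{[1]}$ (tautological bundle, Definition~\ref{def:D_L}) is actually meant, since on $X^{[2,1]}=Z^{[1]}\cong\ldots$ the residue map and the second projection from $X\times X$ differ by exactly the exceptional class. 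Once that identification is nailed down, the rest is immediate from the blow-up formulas and the vanishing $B^b[2,1]=0$.
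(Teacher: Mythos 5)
There is a genuine gap, and it sits exactly at the point you flagged as the ``main obstacle'': the identification of the exceptional divisor $E$ with $B^a$. Your first and third steps are correct ($\pi^\ast(L\boxtimes L^k)\equiv \mathrm{pr}_b^\ast L+\mathrm{res}^\ast L^k$, and $K_{X^{[2,1]}}\equiv \mathrm{pr}_b^\ast K_X+\mathrm{res}^\ast K_X+E$ from the blow-up formula along the codimension-two diagonal), and so is the intermediate conclusion
\[
\mathcal{F}_k-K_{X^{[2,1]}}\equiv \mathrm{pr}_b^\ast(L-K_X)+\mathrm{res}^\ast(L^k-K_X)-2E .
\]
The error is the claim $E\equiv B^a$. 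The class $B^a=\mathrm{pr}_a^\ast(B[2])$ is \emph{twice} the class of the reduced exceptional divisor: by Fogarty, $\frac{B[2]}{2}$ is already an integral divisor class on $X^{[2]}$, and (as used in the proof of Proposition~\ref{prop:Picard group}, or by taking determinants in Lehn's sequence $0\to\mathcal{O}(-E)\to\mathrm{pr}_a^\ast(\mathcal{O}_X^{[2]})\to\mathrm{pr}_b^\ast(\mathcal{O}_X^{[1]})\to 0$) one has
\[
E=\mathrm{pr}_a^\ast\Bigl(\tfrac{B[2]}{2}\Bigr)-\mathrm{pr}_b^\ast\Bigl(\tfrac{B[1]}{2}\Bigr)=\tfrac{1}{2}B^a,
\]
since $B[1]=0$. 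Equivalently $E=\tfrac12 B^{\mathrm{diff}}$, not $B^{\mathrm{diff}}$: the set-theoretic coincidence of $E$ with the locus defining $B^{\mathrm{diff}}$ does not give equality of classes with the paper's normalization (compare Example~\ref{ex:canonical divisor}, where the blow-up formula contributes $+E=+\tfrac12 B^{\mathrm{diff}}$). With the correct relation, $-2E=-B^a$ and your computation lands exactly on the stated formula. In particular there is no normalization discrepancy to explain away, and the speculative final paragraph about $\mathcal{F}_k$ or $\mathrm{res}^\ast$ being defined differently should be deleted. Apart from this factor of two, your route is the same as the paper's.
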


\begin{proof}
Since the morphism $\phi:X^{[2,1]}\to X^{[1]}\times X$ can be identified with the blow up morphism, we can calculate the canonical class $K_{X^{[2,1]}}$ as in the Example \ref{ex:canonical divisor}, i.e. 
\[ K_{X^{[2,1]}}=\mathrm{pr}_b^\ast K_{X^{[1]}}+\text{res}^\ast K_X+E. \]
Then the class of $E$ is be computed by the following exact sequence \cite[Lemma 3.7]{Lehn}:
\[ 0 \to \mathcal{O}(-E) \to \mathrm{pr}_a^\ast(\mathcal{O}_X^{[2]}) \to \mathrm{pr}_b^\ast(\mathcal{O}_X^{[1]})\to 0.\]
In our language, 
\[ E=\mathcal{O}_D^b-(\mathcal{O}_D^a-\frac{1}{2}B^a)=\frac{1}{2}B^a. \]

Now we have
\begin{eqnarray*}
\mathcal{F}_k-K_{X^{[2,1]}}&=&\pi^\ast(L\boxtimes L^k)-E-K_{X^{[2,1]}}\\
&=& \mathrm{pr}_b^\ast(L)+\text{res}^\ast L^k-E-K_{X^{[2,1]}}\\
&=& \mathrm{pr}_b^\ast(L)+\text{res}^\ast L^k-E-\left(\mathrm{pr}_b^\ast K_{X^{[1]}}+\text{res}^\ast K_X+E\right)\\
&=& \mathrm{pr}_b^\ast(L^k-K_X)+\text{res}^\ast(L-K_X)-B^{a}
\end{eqnarray*}

\end{proof}

A special case of a theorem of Butler follows from our calculation \cite[Theorem 2A]{Butler}.

\begin{prop}
Let $X$ be a Hirzebruch surface, and $A$ be an ample line bundles on $X$, then $L=K_X+nA$ is projectively normal for $n \geq 4$.
\end{prop}

\begin{proof}
Since Reider's theorem applies as $n\geq 4$, we assume from the beginning that $n\geq 4$. By Lemma \ref{lem:numerical class} and the vanishing theorem of Kawamata-Viehweg, $L$ is projectively normal if the class of $\mathcal{F}_k-K_{X^{[2,1]}}$ is nef and big for $k\geq 1$. Since $L$ is ample when $n\geq 4$, $\mathrm{pr}_b^\ast L$ is nef and big. Also we notice that $\mathcal{F}_{k+1}=\mathcal{F}_k+\mathrm{pr}_b^{\ast}L$, it suffices to show $\mathcal{F}_1-K_{X^{[2,1]}}$ is nef and big. Write $A=aH+bF$, where $a$ and $b$ are positive integers since $A$ is ample. Then

\begin{eqnarray*}
\mathcal{F}_k-K_{X^{[2,1]}}&=&\mathrm{pr}_b^\ast(L-K_X)+\text{res}^\ast(L-K_X)-B^{a}\\
&=&\mathrm{pr}_b^\ast(pA)+\text{res}^\ast(pA)-B^{a}\\
&=& (na)H^b+(nb)F^b+(na)H^{\mathrm{diff}}+(nb)H^{\mathrm{diff}}-2(H+F)^b-2(H+F)^{\mathrm{diff}}+2D^a_{1,1}\\
&=& (na-2)H^b+(nb-2)F^b+(na-2)H^{\mathrm{diff}}+(nb-2)F^{\mathrm{diff}}+2D^a_{1,1}
\end{eqnarray*}
Because we have
\[\frac{1}{2}B^a=(H+F)^b+(H+F)^{\mathrm{diff}}-D^a_{1,1}. \]
Then by the Proposition \ref{prop:universal family of Hirzebruch surface}, if $n\geq 4, \mathcal{F}_1-K_{X^{[1,2]}}$ is nef and big. Because it lies in the interior of the nef cone, which is the ample cone by the theorem of Kleiman. Hence that is also true for all $k\geq 1$. 
\end{proof}

\begin{remark}
It's natural to try to prove the vanishing of higher syzygies using the nef cones of the nested Hilbert schemes. By Voisin's Hilbert-schematic characterization of Koszul cohomology, one can transform the desired vanishing into the vanishing of the first cohomology of a line bundle on the nested Hilbert scheme. But unfortunately, that line bundle doesn't have enough positivity to grant the vanishing if we just apply Kawamata-Viehweg. It's plausible that one can do a more detailed study of positivity of these divisors to get the desired vanishing, which may be carried out in our future research.
\end{remark}
\section{Open Problems}
\label{sec: problems}

The work in this paper has only scratched the surface of studying the birational geometry of nested Hilbert schemes. 
We conclude with a myriad of related open problems.

\subsection{Nef cones}
It's natural to try to describe the nef cones for other classes of surfaces.
Classically, $k$-very ample line bundles were used to construct extremal nef divisors on $X^{[n]}$. 
This method completely computes the nef cone when $X$ is the projective plane or a Hirzebruch surface. 
However, this approach is insufficient in general.
Recently, Bridgeland stability has been used to compute the nef cone of $X^{[n]}$ on many surfaces. 
It's natural to wonder whether these ideas can be used to compute the nef cones of nested Hilbert schemes.
\begin{problem}
Let $X$ be one of the following: Abelian surface, Enriques surface, or del Pezzo surface. Find the nef cone of $X^{[n+1,n]}$ and $X^{[n,1]}$.
\end{problem}
Note, most of the del Pezzo case may follow from similar methods to this paper as they do in the Hilbert scheme case.

\subsection{Pseudoeffective cones}
Similar methods to our nef cone computation work to compute the effective cone and entire stable base locus decomposition of our two families on rational surfaces for low numbers of points. 
\begin{problem}
Determine $\overline{\text{Eff}}(X^{[n+1,n]})$ and its stable base locus decomposition for all $n$, when $X$ is $\mathbf{P}^2$ or $\mathbf{P}^1\times \mathbf{P}^1$.
\end{problem}
\cite[Theorem 1.4]{Huizenga} computed $\overline{\text{Eff}}(\left(\mathbf{P}^{2}\right)^{[n]})$ for all $n$, and \cite{Ryan} provided an approach for the case of $\mathbf{P}^1\times \mathbf{P}^1$. 
We would expect the answer to depend on the arithmetic properties of $n$ and exceptional bundles on $\mathbf{P}^2$ and $\mathbf{P}^1\times \mathbf{P}^1$.

Since $X^{[n+1,n]}$ is the blow up of $X^{[n]}\times X$ along $Z^{[n]}$, we are able to write down the class of its canonical bundle. 
In particular, for $X=\mathbf{P}^2$,
\begin{align*}
    K_{X^{[n+1,n]}} &=-3H^b-3H^{\text{diff}}+\frac{1}{2}B^{\text{diff}} \\
    &=-3H^b-3H^{\text{diff}}+(nH^a-D^a)-((n-1)H^b-D^b) \\
    &=-2H^b+(n-3)H^{\text{diff}}-D^a+D^b
\end{align*}
so when $n$ gets large, $X^{[n+1,n]}$ cease to be a log Fano variety. 
\begin{problem}
For $n>>0$, determine whether $\left(\mathbf{P}^{2}\right)^{[n+1,n]}$ is a Mori dream space.
\end{problem}

\subsection{General nested Hilbert schemes}
Let $\mathbf{n}=(n_1,n_2,\ldots,n_k)$ be a sequence of decreasing postive integers, and let $X^{[\mathbf{n}]}$ be the corresponding nested Hilbert schemes of $k$ collections of points of length $n_i$, respectively. 
A natural question would be:

\begin{problem}
Calculate Picard group and nef cone of $X^{[\mathbf{n}]}$ when $X$ is a (rational) surface.
\end{problem}

Note we expect the Picard number to be $k(\rho(X) +1)$ if $\mathbf{n}$ does not include one and one less than that if it does.

\subsubsection{Singularities of nested Hilbert schemes}
It is natural to expect that as the length of $\mathbf{n}$ increases, $X^{[\mathbf{n}]}$ becomes more and more singular. 
So then a natural question is can we classify when $X^{[\mathbf{n}]}$ is normal or $\mathbf{Q}$-factorial.

Let $X$ be a smooth projective surface.
By \cite{Fogarty2} and \cite{Cheah}, 
$X^{[n]}$ and $X^{[n+1,n]}$ are smooth but no other nested Hilbert scheme $X^{[\mathbf{n}]}$ is.
We could ask just how bad these singularities are.
\begin{problem}
For which $k$ and $n$ is $X^{[n+k,n+k-1,\ldots,n]}\text{ }\left(X^{[n+k,n]}\right)$ normal or $\mathbf{Q}$-factorial?
\end{problem}

Note that in the case of $X^{[n,1]}$, $X^{[n+1,n]}$ provides a natural resolution of singularity. It's natural to ask
\begin{problem}
Are there any natural resolutions of singularities for $X^{[n+k,n]}$?
\end{problem}

Using the fact that $X^{[n+1,n]}$ is canonically isomorphic to the projectivization of the dualizing sheaf of $Z^{[n]}$, \cite{Song} showed that $X^{[n,1]}$ always has rational singularities. 
This suggests:

\begin{problem}
Does $X^{[\mathbf{n}]}$ always have rational singularities?
\end{problem}

\subsubsection{Irreducibility of nested Hilbert scheme}
Another natural question would be irreducibility. 
\begin{problem}
When is $X^{[\mathbf{n}]}$ irreducible? If it is not irreducible, how many components does it have, and do they have good interpretations?
\end{problem}
Using liasion methods, \cite{GH} showed that $X^{[n+2,n]}$ is irreducible.
Using the perspective of commuting varieties of parabolic subalgebras of $\mathrm{GL}(n)$, the paper \cite{GG} implies the irreducibility of $X^{[n_1,...,n_k]}$ when $n_1 \leq 16$ and of $X^{[n+p,n]}$ when $p \leq 6$. Relatedly, \cite{BE} completely determined the cases when the punctual nested Hilbert schemes $X^{[n+k,n]}_0$ and $X^{[n+k,n+k-1,\cdots,n+1,n]}_0$ are irreducible using commuting varieties of parabolic subalgebras. The related work \cite{BB} suggests that as $\mathbf{n}$ becomes arbitrarly large with arbitrarily long gaps between $n_i$ and $n_{i+1}$, the variety will become reducible.

\subsection{Chow rings}
If $X$ is a smooth surface that carries a $\mathbf{C}^\ast$-action, then this action extends to $X^{[n]}$ and $X^{[n+1,n]}$. If there are only finitely many fixed points, then one can use the results of Bialynicki-Birula to give a description of Chow ring of $X^{[n]}$, which is equivalent to the cohomology ring for these surfaces. For example, Ellingsrud and Str{\o}mme determined the additive structure of $\left(\mathbf{P}^{2}\right)^{[n]}$ in \cite{ES1} and they also found a set of generators for the cohomology ring in \cite{ES2}. So we ask
\begin{problem}
Find the additive structure and a set of generators of the cohomology ring of $X^{[n+1,n]}$ when $X$ is the projective plane.
\end{problem}

\subsection{Higher codimension cycle}
Recently, there has been interest in understanding the positive and pseudoeffective cones for higher codimension cycles (c.f. \cite{FL}). 
So we propose
\begin{problem}
What are the nef, pseudoeffective, pliant, the basepoint free, and the universally pseudoeffective cones of higher codimension cycles for $X^{[n]}$ and $X^{[n+1,n]}$?
\end{problem}

\bibliographystyle{alpha}
\bibliography{reference1}{}

\end{document}